\renewcommand{\epsilon}{\varepsilon}
\newcommand{\newsection}[1]
{\subsection{#1}\setcounter{theorem}{0} \setcounter{equation}{0}
\par\noindent}
\newtheorem{theorem}{Theorem}
\newtheorem{lemma}[theorem]{Lemma}
\newtheorem{corr}[theorem]{Corollary}
\newtheorem{proposition}[theorem]{Proposition}
\newtheorem{deff}[theorem]{Definition}
\newcommand{\bth}{\begin{theorem}}
\newcommand{\ble}{\begin{lemma}}
\newcommand{\bcor}{\begin{corr}}
\newcommand{\bdeff}{\begin{deff}}
\newcommand{\bprop}{\begin{proposition}}
\newcommand{\ele}{\end{lemma}}
\newcommand{\ecor}{\end{corr}}
\newcommand{\edeff}{\end{deff}}
\newcommand{\eprop}{\end{proposition}}
\newcommand{\la}{\lambda}
\newcommand{\e}{\varepsilon}
\newcommand{\supp}{\text{supp }}
\renewcommand{\Pi}{\varPi}
\renewcommand{\epsilon}{\varepsilon}
\newcommand{\Rt}{{\mathbb R}^3}
\newcommand{\tidle}{\tilde}
\newcommand{\R}{{\mathbb R}}
\newcommand{\sqd}{\sqrt{-\Delta_g}}
\newcommand{\hint}{\int_{-\frac12}^{\frac12}}
\newcommand{\rnorm}{\|_{L^{\frac43}([-\frac12,\frac12])}}
\newcommand{\lnorm}{\|_{L^{4}([-\frac12,\frac12])}}
\newcommand{\Rtwo}{{\mathbb R}^2}
\newcommand{\sqdt}{\sqrt{-\Delta_{\tilde g}}}
\newcommand{\Stab}{\text{Stab}(\tilde \gamma)}
\newcommand{\Sstab}{S^{\text{Stab}}_\la}
\newcommand{\Sosc}{S^{\text{Osc}}_\la}
\newcommand{\Rth}{{\mathbb R}^3}
\begin{document}

\subjclass[2010]{Primary, 35F99; Secondary 35L20, 42C99}
\keywords{Eigenfunction estimates, nonpositive curvature, oscillatory integrals}

\title[Endpoint geodesic restriction estimates]
{A few endpoint geodesic restriction estimates for eigenfunctions}
\thanks{The authors were supported in part by the NSF grant DMS-1069175. }
\author{Xuehua Chen}
\author{Christopher D. Sogge}
\address{Department of Mathematics,  Johns Hopkins University,
Baltimore, MD 21218}

\begin{abstract}
We prove a couple of new endpoint geodesic restriction estimates for eigenfunctions.  In the case of general $3$-dimensional compact
manifolds, after a $TT^*$ argument, simply by using the $L^2$-boundedness of the Hilbert transform on $\R$, we are able to improve the corresponding $L^2$-restriction bounds of 
Burq, G\'erard and Tzvetkov~\cite{burq} and Hu~\cite{Hu}.  Also, in the case of $2$-dimensional compact manifolds with nonpositive curvature, 
we obtain improved $L^4$-estimates
for restrictions to geodesics, which, by H\"older's inequality and interpolation, implies improved $L^p$-bounds for all exponents $p\ge 2$.
We do this by using
oscillatory integral theorems of H\"ormander~\cite{Hor}, Greenleaf and Seeger~\cite{GS} and Phong and Stein~\cite{PS0},  along with a simple geometric lemma (Lemma~\ref{lemma3.2}) about properties of the mixed-Hessian of the Riemannian distance function
restricted to pairs of geodesics in Riemannian surfaces. 
We are also able to get further improvements beyond our new results in three dimensions under the assumption of constant nonpositive curvature by exploiting the fact that in this case there are many totally geodesic submanifolds.
\end{abstract}

\maketitle

\newsection{Introduction}

The purpose of this paper is to prove a few new endpoint geodesic restriction estimates for eigenfunctions.  In the case of general $3$-dimensional compact
manifolds (no curvature assumptions), simply by using the $L^2$-boundedness of the Hilbert transform on $\R$, we are able to improve the corresponding $L^2$-restriction bounds of 
Burq, G\'erard and Tzvetkov~\cite{burq} and Hu~\cite{Hu}.  Also, in the case of $2$-dimensional compact manifolds with nonpositive curvature, by using
oscillatory integral theorems of H\"ormander~\cite{Hor}, Greenleaf and Seeger~\cite{GS} and Phong and Stein~\cite{PS0}, \cite{PS}  along with a simple geometric argument, we are able to show that, in this setting, we obtain improved $L^4$-estimates
for restrictions to geodesics.  The case of improved $L^2$-estimates, and hence, by interpolation, improved $L^p$-estimates for $2\le p<4$, in  this setting was obtained earlier
by the second author and Zelditch~\cite{SZ3}.  The improved $L^4$-bounds imply these by H\"older's inequality.  Also, the first author~\cite{Chen} earlier obtained a wide range of $\log$-improvements for restrictions to 
submanifolds $\Sigma\subset M$ under the assumption of nonpositive curvature, including such improvements for all exponents $p>2$ when the codimension of $\Sigma$ is equal to two.  In addition to these results, we shall also be able to obtain further endpoint improvements
for geodesic restriction estimates in 3-dimensions under the assumption of constant nonpositive curvature.

To state our results, if $(M,g)$ is a compact Riemannian manifold of dimension $n\ge2$, we let $\Pi$ denote the space of all unit-length geodesics $\gamma$.  Also,
if $\Delta_g$ is the Laplace-Beltrami operator associated with the metric $g$, then we shall consider eigenfunctions of frequency $\la$, i.e.,
$$-\Delta_ge_\la(x)=\la^2 e_\la(x),$$
and we are interested in whether we can improve certain estimates of the form
\begin{equation}\label{1.1}
\sup_{\gamma\in \Pi}\Bigl(\, \int_\gamma |e_\la|^p\, ds \, \Bigr)^{\frac1p}\le C_{\la}(1+\la)^{\sigma(n,p)}\|e_\la\|_{L^2(M)},
\end{equation}
with $ds$ denoting arc-length measure on $\gamma$.

Burq, G\'erard and Tzvetkov~\cite{burq} and Hu~\cite{Hu} (see also Reznikov~\cite{rez} for earlier work) showed that one can take $C_\la$ to be
independent of $\la>1$ and
\begin{equation}\label{1.2}
\sigma(2,p)=\begin{cases}
\frac14, \quad \text{if } \, \, 2\le p\le 4,
\\
\frac12-\frac1p, \quad \text{if } \, \, p\ge 4,
\end{cases}
\end{equation}
and
\begin{equation}\label{1.3}
\sigma(n,p)=
\begin{cases}
\frac{n-1}2-\frac1p, \quad \text{if } \, \, p\ge 2 \, \, \text{and } \, \, n\ge 4
\\
1-\frac1p, \quad \text{if } \, \, p>2 \, \, \, \text{and } \, \, n=3.
\end{cases}
\end{equation}
Burq, G\'erard and Tzvetkov~\cite{burq} also showed that these estimates are saturated by the highest weight spherical harmonics when $n\ge3$ on round spheres  $S^n$, 
as well as in the case of $2\le p\le 4$ when $n=2$, while in this case the zonal functions saturate the bounds for $p\ge4$.

In the case of $p=2$ and $n=3$, Burq, G\'erard and Tzvetkov~\cite{burq} and Hu~\cite{Hu} also obtained the following estimates
$$\Bigl(\int_\gamma |e_\la|^2 \, ds\Bigr)^{\frac12}\le C (\log(2+\la))^{\frac12}(1+\la)^{\frac12}\|e_\la\|_{L^2(M)},$$
which are not saturated on the sphere, $S^3$.
Our first result is that one can dispose of the $\log$-factor and obtain the following result which is sharp for the aforementioned reason.

\begin{theorem}\label{theorem1}  Fix a $3$-dimensional compact Riemannian manifold $(M,g)$.  Then there is a uniform
constant $C=C(M,g)$ such that
\begin{equation}\label{1.4}
\sup_{\gamma\in \Pi}\Bigl(\int_\gamma |e_\la|^2\, ds\bigr)^{\frac12}\le C(1+\la)^{\frac12}\|e_\la\|_{L^2(M)}.
\end{equation}
\end{theorem}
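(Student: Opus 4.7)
My plan is a $TT^*$ reduction combined with the half-wave parametrix for the spectral cluster. Let $P = \sqrt{-\Delta_g}$ and fix a real, even Schwartz function $\rho$ with $\rho(0)=1$ and $\widehat\rho$ compactly supported in a sufficiently small neighborhood of the origin. Since $\rho(\la-P)e_\la = e_\la$, it suffices to show that $T_\la f := (\rho(\la-P)f)|_\gamma$ satisfies $\|T_\la\|_{L^2(M)\to L^2(\gamma)} \le C(1+\la)^{1/2}$ uniformly in $\gamma\in\Pi$. By duality this reduces to $\|T_\la T_\la^*\|_{L^2(\gamma)\to L^2(\gamma)} \le C(1+\la)$, where the Schwartz kernel of $T_\la T_\la^*$ is $K_\la(s,t) = \rho^2(\la-P)(\gamma(s),\gamma(t))$.

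Next I would expand $\rho^2(\la-P)$ using the standard H\"ormander/Hadamard parametrix for $e^{-i\tau P}$ together with stationary phase in $\tau$, which is legitimate because $\widehat{\rho^2}$ is compactly supported within the injectivity radius. In dimension three this produces, in a fixed small neighborhood of the diagonal,
\[
\rho^2(\la-P)(x,y) = \frac{\la}{d(x,y)}\Bigl[a_+(x,y;\la)e^{i\la d(x,y)} + \overline{a_+(x,y;\la)}\,e^{-i\la d(x,y)}\Bigr] + R(x,y),
\]
with $a_+$ smooth and with its leading-in-$\la^{-1}$ coefficient purely imaginary (mirroring the fact that on $\R^3$ the spectral projection kernel is $\la\sin(\la|x-y|)/(2\pi^2|x-y|)$), $R$ a uniformly bounded remainder, and a crude pointwise bound $|K_\la|\le C\la^2$ on $\{d(x,y)\lesssim 1/\la\}$. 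Along $\gamma$ one has $d(\gamma(s),\gamma(t))=|s-t|$, so $K_\la(s,t)$ takes the same form with $r = |s-t|$.

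I would then handle the three pieces of $K_\la$ as follows. The near-diagonal part is $O(\la)$ on $L^2$ by Schur's lemma, and $R$ contributes $O(1)$ trivially. For the main oscillatory piece, neither half-wave contribution $e^{\pm i\la|r|}/|r|$ is $L^2$-bounded by $O(\la)$ on its own---the Fourier transform of the truncated kernel has a logarithmic blow-up at $\xi = \pm\la$, producing precisely the $\log\la$-factor appearing in the Burq--G\'erard--Tzvetkov/Hu bound. However, the Hermitian symmetry $a_- = \overline{a_+}$, forced by self-adjointness of $\rho^2(\la-P)$, combines the two half-waves into a multiple of $\la\sin(\la|s-t|)/|s-t|=\la\sin(\la(s-t))/(s-t)$ to leading order. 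Using the identity
\[
\frac{\sin(\la r)}{r} = \frac{1}{2i}\Bigl(\frac{e^{i\la r}}{r} - \frac{e^{-i\la r}}{r}\Bigr),
\]
I recognize each summand as a modulated Hilbert kernel; the $L^2(\R)$-boundedness of the Hilbert transform then delivers a uniform-in-$\la$ bound for convolution against $\sin(\la r)/r$. The non-translation-invariant amplitude dependence $a(s,t)$ I would absorb via the decomposition $a(s,t) = a(s,s) + (s-t)b(s,t)$: the first factor contributes a bounded multiplier composed with the Hilbert transform, while the second has a smooth compactly supported kernel, bounded by Schur. Subleading amplitudes in the $\la^{-1}$-expansion produce kernels of size $O(1)$, so their $O(\log\la)$ Schur bounds are dominated.

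The principal obstacle is precisely that one cannot bound the two half-waves separately without incurring a logarithmic loss; the new ingredient (relative to the Schur-test argument of Burq--G\'erard--Tzvetkov and Hu) is the Hermitian-symmetry-based combination into a sine, after which the boundedness of the Hilbert transform on $\R$ supplies the sharp $O(\la)$ bound for $T_\la T_\la^*$.
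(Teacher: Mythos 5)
Your proposal is correct and follows essentially the same route as the paper: a $TT^*$ reduction, the Hadamard parametrix, and the key observation that the restriction of the $3$-dimensional spectral kernel to a geodesic is to leading order a Dirichlet-type kernel proportional to $\sin(\la(t-s))/(t-s)$ rather than a single half-wave $e^{\pm i\la(t-s)}/(t-s)$, whose uniform $L^2$-boundedness (via the Hilbert transform) yields the sharp $O(\la)$ bound without the logarithmic loss. The one organizational difference is that the paper never performs stationary phase in the radial frequency variable---it uses the exact formula $\int_{S^2}e^{i\omega\cdot\eta}\,d\omega = 4\pi\sin|\eta|/|\eta|$ to produce $\int_0^\infty\chi(\la-r)\frac{\sin((t-s)r)}{t-s}\,r\,dr$ directly, applies Minkowski's inequality over $r$ to reduce to the fixed-$r$ Hilbert transform bound, and replaces the Hadamard coefficient $w(t,s)$ by $w(t,t)\equiv 1$ at an admissible $O(\la)$ error using $w(t,s)-1=O(|t-s|)$---so it sidesteps both your Hermitian-symmetry/purely-imaginary-amplitude argument for recombining the half-waves and your Taylor decomposition $a(s,t)=a(s,s)+(s-t)b(s,t)$.
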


We remark that, in higher dimensions $n\ge4$, the problem of what are the optimal $L^2(\Sigma)$ restriction estimates when $\Sigma$ is a submanifold of $M$ of codimension
two is open.  We are able to prove new results in dimension $n=3$ in this case  due to the fact that after using the Hadamard parametrix and a simple $TT^*$ argument
the needed punchline just follows from the $L^2(\R)$-boundedness of the Hilbert transform.

The second author and Zelditch showed in \cite{SZ3}, motivated in part by earlier work of Bourgain~\cite{bourgainef} and the second author~\cite{Sokakeya}, that if
$n=2$ and one assumes that $(M,g)$ has nonpositive curvature then one can improve \eqref{1.1} if $p=2$, and, hence, by interpolation if $2\le p<4$, to get 
$L^2(M)\to L^p(\gamma)$ bounds which are $o(\la^{\frac14})$.  Our second result is
the following slightly stronger one saying that one  can also obtain these improvement when $p=4$:

\begin{theorem}\label{theorem2}  Fix a $2$-dimensional compact manifold of nonpositive curvature.  Then if $\{e_{\la_j}\}$ is an orthonormal basis of eigenfunctions
of frequency $\la_j$, we have
\begin{equation}\label{1.5}
\limsup_{\la_j\to \infty}\left( \sup_{\gamma\in \Pi}\la_j^{-\frac14}\Bigl(\int_\gamma |e_{\la_j}|^4 \, ds\Bigr)^{\frac14}\right)=0.
\end{equation}
\end{theorem}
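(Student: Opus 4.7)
The strategy is to adapt the $TT^*$-plus-Hadamard-parametrix argument that Sogge and Zelditch \cite{SZ3} used to obtain the corresponding $L^2(\gamma)$ improvement, now using the oscillatory-integral theorems advertised in the introduction in place of the cruder $L^2$ bounds that sufficed there. Fix $\chi\in\mathcal{S}(\R)$ with $\chi(0)=1$ and $\hat\chi$ supported in a small neighborhood of the origin, let $T=T(\la)$ be a parameter of size $\e\log\la$ to be tuned at the end, and consider the operator $\rho_\la(\sqd):=|\chi(T(\la-\sqd))|^2$. Since $\rho_\la(\sqd)e_\la=e_\la$ on each eigenspace of frequency $\la$, a $TT^*$ reduction shows that the bound $\|e_\la\|_{L^4(\gamma)}=o(\la^{1/4})$ is equivalent to
\[
\bigl\|\rho_\la(\sqd)\bigr\|_{L^{4/3}(\gamma)\to L^4(\gamma)}=o(\la^{1/2}).
\]

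Use the Fourier-inversion formula $\rho_\la(\sqd)=\frac{1}{2\pi T}\int \widehat{|\chi|^2}(\tau/T)\,e^{i\tau\la}\cos(\tau\sqd)\,d\tau$ and split the $\tau$-integral at $|\tau|=1$. The short-time piece is controlled by the local Hadamard parametrix on $M$ and contributes the classical $O(\la^{1/2})$-bound with no $T$-dependence; the long-time piece $1\le|\tau|\le T$ carries the extra factor $1/T$ that will produce the $o(1)$ gain, provided its kernel is at most polynomially large in $\la$. Since $(M,g)$ has nonpositive curvature, Cartan-Hadamard identifies the universal cover $(\tilde M,\tilde g)$ with $\Rtwo$, and the wave kernel on $M$ equals the $\Gamma$-periodization of its lift to $\tilde M$, so the long-time kernel on $\gamma\times\gamma$ becomes a sum $\sum_{\alpha\in\Gamma\setminus\{e\}}K_\la^\alpha(s,t)$ of oscillatory integrals whose phases, after stationary phase in $\tau$, are essentially $\la\,\tilde d(\tilde\gamma(s),\alpha\tilde\gamma(t))$. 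According to Lemma~\ref{lemma3.2}, the mixed Hessian $\partial_s\partial_t\tilde d(\tilde\gamma(s),\alpha\tilde\gamma(t))$ of this phase is either uniformly bounded below or vanishes to a controlled finite order. H\"ormander's~\cite{Hor} $L^2$-theorem handles the first regime and yields $\|K_\la^\alpha\|_{L^{4/3}\to L^4}\lesssim\la^{-1/2}$; the refined estimates of Greenleaf-Seeger~\cite{GS} and Phong-Stein~\cite{PS0} handle the second and supply the same bound up to a subpolynomial loss of $\la^{o(1)}$. Since the volume-growth estimate in nonpositive curvature bounds the number of relevant $\alpha$ by $e^{CT}$, with $T=\e\log\la$ and $\e\to 0$ along a diagonal sequence the long-time contribution is $o(\la^{1/2})$ as required.

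The hardest step is the $L^{4/3}\to L^4$ control of each $K_\la^\alpha$. Unlike the $L^2(\gamma)$ improvement of \cite{SZ3}, where a single application of H\"ormander's theorem suffices and it is enough to know that the mixed Hessian is generically nonvanishing, the $L^4$ problem is sensitive to how the kernel concentrates along curves in $\gamma\times\gamma$. This is precisely where Lemma~\ref{lemma3.2} becomes indispensable: it converts the qualitative nonpositive-curvature hypothesis into a quantitative statement about the order of vanishing of the mixed Hessian along pairs of geodesics in $\tilde M$, which is exactly what fits the hypotheses of the Phong-Stein and Greenleaf-Seeger theorems. Once that degenerate-phase bound is in hand, the summation over deck transformations is routine, modulo the familiar diagonal argument used to absorb the $\la^{C\e}$ losses.
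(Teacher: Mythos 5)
Your proposal captures the correct skeleton of the argument: a $TT^*$ reduction to a kernel bound for $\chi(T(\la-\sqd))$ on $\gamma\times\gamma$, a time-splitting at $|\tau|\sim 1$, lifting the long-time kernel to the universal cover via the deck-transformation sum, and controlling the nontrivial summands by pairing Lemma~\ref{lemma3.2} with H\"ormander's non-degenerate theorem and the Greenleaf--Seeger/Phong--Stein one-sided fold theorem. But there are two concrete problems with how you've assembled it.

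First, the paper does \emph{not} take $T=T(\la)=\e\log\la$; it fixes $T=T(\e)\gg1$ once and for all, so that the stabilizer contribution $CT^{-1/2}\la^{1/2}$ is $<\e\la^{1/2}$, while the $O(e^{CT})$ oscillatory terms are absorbed into a $T$-dependent constant $C_T$ multiplying the strictly smaller power $\la^{3/8}$. Your time-dependent $T$ is not a harmless variant: the constants in H\"ormander's estimate and in the fold theorem depend on upper bounds for finitely many $t,s$-derivatives of the phase $\phi_{\gamma,\alpha}$ (see \eqref{3.18} and the remarks following \eqref{fold}). In negative curvature these derivative bounds grow with the distance to $\alpha(\tilde\gamma)$, hence with $T$, so $C_\alpha$ in \eqref{3.43} is not uniform in $T$. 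Letting $T\to\infty$ with $\la$ therefore produces an uncontrolled constant times a positive power of $\la$, and the ``diagonal sequence'' you invoke does not close the argument without further work. Keeping $T$ fixed sidesteps this entirely.

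Second, the quantitative claim about the oscillatory integral bounds is wrong. For the normalized operator \eqref{3.40}, H\"ormander gives $\|T_\la\|_{L^{4/3}\to L^4}\lesssim \la^{-1/4}$, and the one-sided fold theorem of Greenleaf--Seeger/Phong--Stein gives the strictly weaker $\la^{-1/8}$ after interpolation (see \eqref{fold} with $q=4$) --- not ``the same bound up to a subpolynomial loss.'' Combined with the $O(T^{-1}\la^{1/2})$ size of the amplitude in \eqref{3.15}, each nontrivial summand $K^\gamma_{T,\la,\alpha}$ therefore satisfies $\|K^\gamma_{T,\la,\alpha}\|_{L^{4/3}\to L^4}\lesssim \la^{3/8}$ in the fold regime (this is exactly \eqref{3.43}), not the $\la^{-1/2}$ you state. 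The proof is salvaged because $3/8<1/2$, so $C_T\la^{3/8}=o(\la^{1/2})$ for fixed $T$; but that is a different, and much tighter, margin than the one you describe, and it is precisely why the paper must keep $T$ independent of $\la$.
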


We remark that, to the best of our knowledge, this is the first result establishing an improvement under general (e.g., non-arithmetic) assumptions of an estimate that is saturated
both by zonal functions (ones concentrating at points) and highest weight spherical harmonics (ones concentrating along periodic orbits).  Almost all the results involve improving
estimates such as the ones in \cite{S2} that are saturated by the zonal functions and therefore involve relatively large exponents.  See, e.g., 
\cite{BSSY}, \cite{Chen}, \cite{HT}, 
 \cite{STZ} and \cite{SZDuke}.  The only
general result about improving estimates that are saturated by functions concentrating on periodic orbits seem to be the ones in \cite{SZ3} (see also \cite{SZ2}
for related work), which also are just for the $2$-dimensional case.  On the torus, though, there has been recent work showing that much stronger results hold.  See, e.g., 
\cite{Berard},
\cite{B1} and \cite{BRud}.  Toth and Zelditch~\cite{TZ} also showed that there are substantial improvements under certain ergodic assumptions.  We also would like to point out that
the problem of finding improvements for estimates that are saturated by the highest weight spherical harmonics is related to the problem of obtaining lower bounds
for the size of nodal sets of eigenfunctions (see \cite{CM}, \cite{SZ} and \cite{HeS}).

In the main, in establishing \eqref{1.5}, we follow the template provided the proof of the earlier corresponding weaker estimates in \cite{SZ3} where the $L^4(\gamma)$-norms are replaced by $L^2(\gamma)$ ones,
except instead of using microlocal analysis to essentially reduce matters to an estimate involving the stabilizer subgroup of deck transformations associated with
a given $\gamma\in \Pi$, we now make this reduction by applying oscillatory integral theorems of H\"ormander's~\cite{Hor}, Greenleaf and Seeger~\cite{GS}
and Phong and Stein~\cite{PS0}   along a simple
fact (Lemma~\ref{lemma3.2}) about the Hessian of the distance function restricted to geodesics when $n=2$.  

It is more difficult to prove the facts about the Hessian of the distance function in 3-dimensions that our techniques require.  However, if we assume
constant curvature, we can obtain the following endpoint estimates which are  the analog of Theorem~\ref{theorem2} in this case:

\begin{theorem}\label{theorem3}  Fix a compact $3$-dimensional compact manifold of {\em constant} nonpositive curvature.  
Then if $\{e_{\la_j}\}$ is an orthonormal basis of eigenfunctions
of frequency $\la_j$, we have
\begin{equation}\label{1.6}
\limsup_{\la_j\to \infty}\left( \sup_{\gamma\in \Pi}\la_j^{-\frac12}\Bigl(\int_\gamma |e_{\la_j}|^2 \, ds\Bigr)^{\frac12}\right)=0.
\end{equation}
\end{theorem}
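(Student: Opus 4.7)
The plan is to follow the template used for Theorem~\ref{theorem2}, exploiting the fact that when $(M,g)$ has constant nonpositive curvature the universal cover $\tilde M$ is either $\Rth$ or the hyperbolic space $\mathbb{H}^3$ and the deck transformation group $\Gamma$ acts by isometries. Fix $\gamma\subset M$ and let $\tilde\gamma\subset\tilde M$ be a lift. As in \cite{SZ3} it suffices to produce an asymptotically vanishing bound for $\la^{-1/2}\|\chi_\la^T e_\la\|_{L^2(\gamma)}$, where $\chi_\la^T=\rho(T(\sqd-\la))$ for a Schwartz $\rho$ whose Fourier transform is supported in a small neighborhood of $\pm 1$, and where $T=c'\log\la$ is an Ehrenfest-type cutoff. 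After a $TT^*$ argument this reduces to controlling the $L^2(\gamma)\to L^2(\gamma)$ norm of a kernel that, by the Hadamard parametrix lifted to $\tilde M$, takes the form
\begin{equation*}
K_T(s,t)=\sum_{\alpha\in\Gamma}\int e^{i\la r_\alpha(s,t,\tau)}\,a_{T,\alpha}(s,t,\tau)\,d\tau,
\end{equation*}
with phase built from $d_{\tilde M}(\tilde\gamma(s),\alpha\tilde\gamma(t))$ and amplitudes supported where this distance is at most $T$.

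Splitting the sum into $\alpha\in\Stab$ and $\alpha\notin\Stab$, the stabilizer piece is, up to translations along $\tilde\gamma$, the one-dimensional operator already handled in Theorem~\ref{theorem1}; it yields $O(\la)$ for the operator norm, and when combined with the $1/T$ gain supplied by the spectral envelope $\rho$ it contributes $o(\la)$ to the squared $L^2$-norm of the restricted eigenfunction. The remaining task is to show that the off-diagonal sum also contributes only $O(\la)$ uniformly up to the Ehrenfest scale. The key geometric ingredient is the $3$-dimensional constant-curvature analog of Lemma~\ref{lemma3.2}: for each $\alpha\notin\Stab$, the mixed Hessian $\partial_s\partial_t d_{\tilde M}(\tilde\gamma(s),\alpha\tilde\gamma(t))$ is nondegenerate away from a lower-dimensional set that one can describe explicitly. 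By homogeneity of $\Rth$ and $\mathbb{H}^3$ one may put $\tilde\gamma$ onto a standard axis and, by rotations about this axis, arrange $\alpha\tilde\gamma$ in normal form; when the pair is coplanar one invokes Lemma~\ref{lemma3.2} directly on the common totally geodesic $2$-plane, and when the pair is skew a direct computation in the model metric produces the required nondegeneracy, with the degenerate locus reducing to the single $(s,t)$ at the feet of the common perpendicular.

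With this Hessian information in hand, one applies H\"ormander's oscillatory integral theorem \cite{Hor} on the region where the Hessian is nondegenerate and the theorems of Greenleaf--Seeger \cite{GS} and Phong--Stein \cite{PS0} in a neighborhood of the degenerate locus, exactly as in the proof of Theorem~\ref{theorem2}, to obtain a decay of $O(\la^{-1/2})$ on each off-diagonal piece with only polynomial loss in $T$. Summing over the at most $O(e^{cT})$ group elements contributing on the fixed window and choosing $c'$ small enough, the total off-diagonal contribution is $O(\la^{1-\delta})$ for some $\delta>0$, which together with the stabilizer estimate gives \eqref{1.6}. The main obstacle is the $3$-dimensional mixed-Hessian computation itself: a generic skew pair of geodesics in three-space does not lie in a common totally geodesic $2$-surface, so Lemma~\ref{lemma3.2} cannot be invoked directly, and it is precisely the abundance of totally geodesic submanifolds in constant nonpositive curvature that both reduces skew configurations to an explicit normal form and keeps the degenerate set of $\partial_s\partial_t d$ small enough to run the oscillatory integral bounds sharply.
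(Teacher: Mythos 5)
Your overall strategy matches the paper: lift to the universal cover via the Cartan--Hadamard theorem, expand the wave kernel as a sum over deck transformations $\Gamma$, split into the stabilizer piece and the oscillatory remainder, prove a $3$-dimensional constant-curvature analogue of Lemma~\ref{lemma3.2} for the mixed Hessian of $d_{\tilde g}$ restricted to pairs of geodesics, and apply H\"ormander~\cite{Hor} together with Greenleaf--Seeger~\cite{GS} and Phong--Stein~\cite{PS0} to the off-stabilizer terms. Your description of the geometric lemma is also in the right spirit: the constant curvature hypothesis supplies enough totally geodesic $2$-planes to reduce skew configurations, and the degenerate set of $\partial_t\partial_s d_{\tilde g}$ is small. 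There are, however, two places where the proposal as written has real gaps.

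First, you take $T=c'\log\la$ from the outset and assert that each off-diagonal oscillatory integral contributes $O(\la^{3/4})$ with \emph{only polynomial loss} in $T$. This is not established, and the paper deliberately avoids needing it: the constants in \eqref{3.41} and \eqref{fold} depend on lower bounds for $|\varphi''_{ts}|+|\varphi'''_{tts}|+|\varphi'''_{tss}|$ on the support of the amplitude, and for deck elements $\alpha$ with $d_{\tilde g}(\tilde\gamma(0),\alpha\tilde\gamma(0))\sim T$ these lower bounds degrade with $T$ in a way that is not shown (and need not be) polynomial. The paper sidesteps this entirely by keeping $T$ \emph{fixed}: the bound obtained is $C\la\bigl(T/\log(2+T)\bigr)^{-1}+C_T\la^{3/4}$ at the $TT^*$ level, and one sends $\la\to\infty$ first and then $T\to\infty$. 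No control of $C_T$ in $T$ is required. If you insist on the Ehrenfest scale $T\sim\log\la$ you must actually prove the polynomial dependence, and that is a substantial additional step not present in the paper.

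Second, you do not address what happens \emph{at} the exceptional point. The $3$-dimensional Hessian lemma (Lemma~\ref{lemma4.1}) only gives that there is at most one point $(t_0,s_0)$ where $\phi''_{ts}$, $\phi'''_{tts}$ and $\phi'''_{tss}$ all vanish simultaneously; at that point neither H\"ormander's nondegeneracy hypothesis nor the fold hypothesis of \cite{GS}, \cite{PS0} holds, so the oscillatory integral theorems do not apply ``in a neighborhood of the degenerate locus'' without a cutoff. The paper handles this by excising a set of size $\e$ around $(t_0,s_0)$, bounding the excised piece trivially by $O(\e\la)$ via Young's inequality (using the $O(T^{-1}\la\, r^{-1})$ amplitude bound), applying \eqref{3.41} and \eqref{fold} on the complement to get $C_{T,\e}\la^{3/4}$, and finally choosing $\e$ small in terms of $T$. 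This two-parameter limiting argument ($\la\to\infty$, then $\e\downarrow 0$, then $T\to\infty$) is the mechanism producing the $o(\la^{1/2})$ improvement and needs to appear in the argument.
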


Like in the case of Theorem~\ref{theorem2} above, this result is an improvement, under the current hypotheses, of an estimate that is saturated by both the zonal functions and the highest weight spherical harmonics on $S^3$ in this case.

\bigskip


The authors would like to thank their colleague J.-E. Chang and their former colleagues W. P. Minicozzi II and S. Zelditch for helpful conversations.

\newsection{Proof of an improved endpoint estimate in $3$-dimensions}

In this section we shall prove Theorem~\ref{theorem1}.  Fix an even function $\rho\in {\mathcal S}(\R)$ satisfying
\begin{equation}\label{2.1}
\rho(0)=1 \quad \text{and } \, \Hat \rho(\tau)=0, \, \, \, \text{if } \, |\tau|\ge \delta,
\end{equation}
where $\delta>0$ is smaller than a quarter of the injectivity radius of our $3$-dimensional compact Riemannian manifold $(M,g)$.  It then follows that
$$\rho(\la-\sqd)e_\la=e_\la,$$
and consequently, we would have \eqref{1.4} if we could show that there is a uniform constant $C=C(M,g)$ such that
\begin{equation}\label{2.2}
\Bigl(\int_\gamma |\rho(\la-\sqd)f|^2 \, ds\Bigr)^{\frac12}\le C(1+\la)^{\frac12}\|f\|_{L^2(M)}
\end{equation}
whenever $\gamma$ is a segment of a geodesic of length $\ell$ equal to  half the injectivity radius of $(M,g)$.

Let us choose geodesic normal coordinates about the center of $\gamma$ so that
$$\gamma(s)=(s,0,0), \quad |s|\le \ell/2.$$
In these coordinates the Riemannian distance, $d_g(\gamma(t),\gamma(s))$ between two points on the segment is just
\begin{equation}\label{2.3}
d_g(\gamma(t),\gamma(s))=|t-s|.
\end{equation}

Let $\chi\in {\mathcal S}(\R)$ denote the square of $\rho$ and $\chi(\la-\sqd)$ the associated multiplier operator.  Then if $\chi(\la-\sqd)(x,y)$, $x,y\in M$, denotes
its kernel, it follows that \eqref{2.2} would be a consequence of the following estimate
\begin{equation}\label{2.4}
\left(\int_{-\ell/2}^{\ell/2}\Bigl|\int_{-\ell/2}^{\ell/2}\chi(\la-\sqd)(\gamma(t),\gamma(s)) \, h(s)\, ds\Bigr|^2 \, dt\right)^{\frac12}
\le C(1+\la)\|h\|_{L^2([-\ell/2,\ell/2])}.
\end{equation}

To prove this inequality we first write
\begin{align}\label{2.5}
\chi(\la-&\sqd)(x,y)
\\ &=\frac1{2\pi}\int_{-\infty}^\infty \Hat \chi(\tau)e^{i\tau \la}\bigl(e^{-i\tau\sqd}\bigr)(x,y)\, d\tau \notag
\\
&=\frac1\pi \int_{-\infty}^\infty \Hat \chi(\tau)e^{i\la\tau}\bigl(\cos \tau\sqd\bigr)(x,y)\, d\tau - \chi(\la+\sqd)(x,y),  \notag
\end{align}
where $\bigl(e^{-i\tau\sqd}\bigr)(x,y)$ and $\bigl(\cos \tau \sqd\bigr)(x,y)$ denote the kernels of the operators $e^{-i\tau\sqd}$ and
$\cos \tau\sqd$, respectively.  Since 
\begin{equation}\label{2.6}
|\chi(\la + \sqd)(x,y)|\le C_N(1+\la)^{-N}, \quad N=1,2,3,\dots,
\end{equation}
in order to prove \eqref{2.4} it is enough to show that if we replace the kernel involved by the restriction to $\gamma\times \gamma$ of the second to last term in \eqref{2.5} then we have the analog of
\eqref{2.4}.  In other words if we set
\begin{equation}\label{2.7}
K(t,s)=\frac1\pi \int_{-\infty}^\infty \Hat \chi(\tau)e^{i\la \tau} \bigl(\cos \tau \sqd\bigr)(\gamma(t),\gamma(s)) \, d\tau, \quad |s|, |t|\le \ell/2,
\end{equation}
then we have reduced matters to showing that
\begin{equation}\label{2.8}
\left(\int_{-\ell/2}^{\ell/2}\Bigl|\int_{-\ell/2}^{\ell/2}K(t,s)\, h(s)\, ds \Bigr|^2 \, dt\right)^{\frac12}\le C(1+\la)\|h\|_{L^2([-\ell/2,\ell/2])}.
\end{equation}

Since $\Hat \chi(\tau)=(2\pi)^{-1}\bigl(\Hat \rho * \Hat \rho\bigr)(\tau)$ vanishes when $|\tau|$ is larger than half the injectivity radius of $(M,g)$, we may
use the Hadamard parametrix (see \cite{Hor3} and \cite{SoggeHang}) to evaluate our kernel in \eqref{2.7}.  
In particular,  because of \eqref{2.3}, we have that for $\tau \in \text{supp } \Hat \chi$ and $|s|, |t|\le \ell/2$ we can write
\begin{multline}\label{2.9}
\bigl(\cos \tau\sqd\bigr)(\gamma(t),\gamma(s))=(2\pi)^{-3}w(t,s) \int_{\Rt}e^{i(t-s)\xi_1} \cos (\tau|\xi|) \, d\xi
\\
+\sum_{\pm}\int_{\Rt}e^{i(t-s)\xi_1}e^{\pm i\tau|\xi|} a_\pm (\tau, t,s; |\xi|)\, d\xi + R(s,t),
\end{multline}
where the leading coefficient $w(t,s)$ satisfies
\begin{equation}\label{2.10}
w(t,s)\in C^\infty([-\ell/2,\ell/2]\times [-\ell/2,\ell/2]) \quad \text{and } \, \, w(t,t)\equiv 1, \, \, |t|\le \ell/2,
\end{equation}
and where the remainder term may be taken to be bounded, i.e.,
\begin{equation}\label{2.11}
|R(t,s)|\le C, \quad |t|, |s|\le \ell/2,
\end{equation}
and, finally, where $a_\pm$ are symbols of order $-2$, and so in particular satsify
$$|\partial^j_\tau a_\pm(\tau,t,s; |\xi|)| \le C_j(1+|\xi|)^{-2}, \quad \text{if } \, \tau \in \supp \chi, \, \, \text{and } \, j=0,1,2,\dots,$$
(see \cite[Theorem 3.1.5]{SoggeHang}).
Based on this we deduce that for each $N=1,2,3,\dots$ there is a constant $C_N$ such that
\begin{multline*}\Bigl|\int_{\Rt}\int_{-\infty}^\infty \Hat \chi(\tau) e^{i(t-s)\xi_1}a_{\pm}(\tau,t,s; |\xi|) e^{i\la\tau}e^{\pm i\tau|\xi|}\, d\tau d\xi\Bigr|
\\
\le C_N\int_{\Rt}(1+|\la-|\xi|\, |)^{-N}(1+|\xi|)^{-2}\, d\xi.
\end{multline*}
If $N>1$ the last expression is uniformly bounded independently of $\la$, and, consequently, by \eqref{2.7}, \eqref{2.9} and \eqref{2.11}, we have
\begin{multline}\label{2.12}
K(t,s)= (2\pi)^{-3}\frac{w(t,s)}\pi \int_{-\infty}^\infty \int_{\Rt}\Hat \chi(\tau) e^{i(t-s)\xi_1} e^{i\la \tau}\cos (\tau |\xi|)\, d\xi d\tau
+O(1), 
\\ |t|, |s|\le \ell/2,
\end{multline}
where the $O(1)$ error term is uniformly bounded for all $\la>0$.

If we replace $\cos(\tau|\xi|)$ by $e^{i\tau|\xi|}$ in the above integral then it is easy to see that the resulting expression is $O((1+\la)^{-N})$ for each $N=1,2,3,\dots$
since $\chi\in {\mathcal S}(\R)$.   Therefore, by Euler's formula if we let
$$K_0(t,s)=(2\pi)^{-4}w(t,s)\int_{-\infty}^\infty\int_{\Rt}\Hat \chi(\tau)e^{i(t-s)\xi_1}e^{i\tau(\la-|\xi|)} \, d\xi d\tau,$$
then we would have \eqref{2.8} and therefore complete the proof of Theorem~\ref{theorem1} if we could 
show that
\begin{equation}\label{2.13}
\left(\int_{-\ell/2}^{\ell/2}\Big|\int_{-\ell/2}^{\ell/2}K_0(t,s)\, h(s)\, ds\Bigr|^2 \, dt\right)^{\frac12}\le C(1+\la)\|h\|_{L^2([-\ell/2,\ell/2])}.
\end{equation}
We can evaluate $K_0$ if we recall the following formula for the Fourier transform of Euclidean surface measure, $d\omega$, on the unit sphere $S^2\subset \Rt$
\begin{equation}\label{2.14}\int_{S^2}e^{i\omega\cdot \eta}d\omega =4\pi \frac{\sin |\eta|}{|\eta|}.
\end{equation}
Using this, polar coordinates and Fourier's inversion formula for $\R$ we deduce that for $|t|,|s|\le \ell/2$,
\begin{align}\label{2.15}
K_0(t,s)&=(2\pi)^{-3}w(t,s)\int_{\Rt}\chi(\la-|\xi|)e^{i(t-s)\xi_1}\, d\xi
\\
&=\frac{w(t,s)}{2\pi^2}\int_0^\infty \chi(\la-r)\frac{\sin(t-s)r}{t-s} \, r dr \notag
\\
&=
\frac1{2\pi^2}\int_0^\infty \chi(\la-r)\frac{\sin (t-s)r}{t-s}\, rdr + O((1+\la)),
\notag
\end{align}
using \eqref{2.10} and the fact that for $\la>0$ we have
\begin{equation}\label{2.16}
\int_0^\infty |\chi(\la-r)|\, rdr \le C(1+\la).
\end{equation}

Using \eqref{2.15} and Minkowski's integral inequality, we conclude that we would have \eqref{2.13} if 
\begin{equation*}
\int_0^\infty |\chi(\la-r)| r \, \Bigl(\int_{-\ell/2}^{\ell/2}\Bigl|\int_{-\ell/2}^{\ell/2}\frac{\sin(t-s)r}{t-s} \, h(s)\, ds \Bigr|^2 \, dt\Bigr)^{\frac12} \, dr
\le C(1+\la)\|h\|_{L^2([-\ell/2,\ell/2])}.
\end{equation*}
This just follows from \eqref{2.16}  and the well known uniform bounds
\begin{equation}\label{2.17}
\int_{-\infty}^\infty \Bigl|\int_{-\infty}^\infty \frac{\sin(t-s)r}{t-s}\, f(s)\, ds\Bigr|^2 \, dt \le C\|f\|_{L^2(\R)}^2, \quad r>0,
\end{equation}
since $\sin (t-s)r/(t-s)$ is (essentially) the  Dirichlet kernel for the partial summation of one-dimensional Fourier series
(see e.g., \cite{StS} p. 37).  Alternatively, \eqref{2.17} also follows from Euler's formula and the $L^2$-boundedness of
the Hilbert transform
$$\int_{-\infty}^\infty \Bigl| \, \text{P.V.}\int_{-\infty}^\infty \frac{f(s)}{t-s}\, ds \, \Bigr|^2 \, dt \le C\|f\|_{L^2(\R)}^2.$$

This completes the proof of Theorem~\ref{theorem1}.

\newsection{Proof of an improved endpoint estimate for $2$-dimensional manifolds with nonpositive curvature}

Let us now turn to the proof of Theorem~\ref{theorem2}.  We are now assuming that $(M,g)$ is a compact two-dimensional Riemannian manifold with nonpositive curvature.  To prove Theorem~\ref{theorem2} we shall show that given $\e>0$ we can find a number $\Lambda(\e)<\infty$ so that
\begin{equation}\label{3.1}
\Bigl(\int_\gamma |e_\la|^4\, ds\Bigr)^{\frac14}\le \e \la^{\frac14}\|e_\la\|_{L^2(M)}, \quad \la\ge \Lambda(\e), \, \, \gamma\in \Pi,
\end{equation}
if, as before, $e_\la$ is an eigenfunction of $-\Delta_g$ of frequency $\la$ and $\Pi$ is the space of all unit-length geodesics in $(M,g)$.  To simplify some of the calculations, after perhaps multiplying the metric by a constant, we shall, as we may, assume from now on that the injectivity radius of $(M,g)$ is ten or more.

As in the proof of Theorem~\ref{theorem1}, we shall prove \eqref{3.1} by estimating an operator which reproduces eigenfunctions.  So if $\rho\in {\mathcal S}(\R)$, as before, is an even
function satisfying \eqref{2.1} with now $\delta=1/2$, we now note that
$$\rho(T(\la-\sqd))e_\la=e_\la.$$
In the proof of Theorem~\ref{theorem1} we just took $T$ to be equal to one, while now our choice of $T$ depends on $\e$.

Specifically, to prove \eqref{3.1} we note that it suffices to show that there is a uniform constant $C$, independent of $T\gg 1$, and also constants $C_T$ depending on such $T$ so that
\begin{multline}\label{3.2}
\Bigl(\int_\gamma |\rho(T(\la-\sqd))f|^4\, ds\Bigr)^{\frac14}\le CT^{-\frac14}(1+\la)^{\frac14}\|f\|_{L^2(M)}
\\
+C_T(1+\la)^{\frac3{16}}\|f\|_{L^2(M)}, \quad \gamma \in \Pi.
\end{multline}
If $T\gg 1$ is chosen so that $CT^{-\frac14}<\e$, we then obtain \eqref{3.1}.  We shall obtain bounds of the form \eqref{3.2} first for a given $\gamma\in \Pi$, and then, at the end, see how we can use the argument for this special case to obtain bounds which are uniform as $\gamma$ ranges over $\Pi$ by a simple compactness argument.  

As we shall see, the first term in the right side of \eqref{3.2} comes from a sum involving the stabilizer group of deck transformations for the lift of $\gamma$ to the universal cover which just involves a few terms (typically only one).  Due to the paucity of the number of terms in the sum that arises, we shall be able to estimate the contribution of the stabilizer group very easily just by using size estimates based on stationary phase.  In some ways this part of the proof is akin to an argument going back to B\'erard~\cite{Berard}, but for different reasons, B\'erard was able to treat all terms in his sum this way.  In \cite{SZ3} the second author and Zelditch, in proving $L^2(\gamma)$-restriction estimates, were able to treat the terms not coming from the stabilize group via an argument using microlocal analysis.   Estimating these terms is more delicate, since if the curvature of $(M,g)$ is strictly negative, the number of terms arising grows
exponentially in $T$.  The argument in \cite{SZ3} does not seem to be able to prove the $L^4$-endpoint estimate.  We shall get around this by using oscillatory integral theorems in
\cite{Hor}, \cite{GS} and \cite{PS0} along with a simple geometric lemma about the mixed Hessian of the distance function in two-dimensions.

Let us now get down to the details.  If $\chi(\tau)=(\rho(\tau))^2$, then, as in the proof of Theorem~\ref{theorem1}, we see via a simple $TT^*$ argument that \eqref{3.2} would follow from showing that if $\gamma=\gamma(t)$, $|t|\le 1/2$, is a parameterization by arclength of our fixed $\gamma\in \Pi$, then for $\la>1$ we have
\begin{multline}\label{3.3}
\Bigl(\hint \Bigl| \hint \chi\bigl(T(\la-\sqd)\bigr)(\gamma(t),\gamma(s))\, h(s) \, ds \Bigr|^4\, dt \Bigr)^{\frac14}
\le CT^{-\frac12}\la^{\frac12}\|h\rnorm
\\ 
+C_T\la^{\frac38}\|h\rnorm.
\end{multline}
Here, $\chi(T(\la-\sqd))(x,y)$, $x,y\in M$, denotes the kernel of the multiplier operator $\chi(T(\la-\sqd))$.

Note that since we are assuming \eqref{2.1} with $\delta=1/2$ it follows that 
$$\Hat \chi(\tau)=(2\pi)^{-1} (\Hat \rho * \Hat \rho)(\tau)=0, \quad \text{if } \, \, |\tau|\ge 1.$$  Pick a bump function $\beta\in C^\infty_0(\R)$ satisfying
$$\beta(\tau)=1 \, \, \, \text{for } \, \, |\tau|\le 3/2, \, \, \text{and } \, \, \beta(\tau)=0, \, \, \, |\tau|\ge 2.$$
We then may write
\begin{multline*}
\chi(T(\la-\sqd))(x,y)=\frac1{2\pi T}\int \beta(\tau)\Hat \chi(\tau/T) e^{i\la\tau} \bigl(e^{- i\tau\sqd}\bigr)(x,y)\, d\tau
\\
+\frac1{2\pi T}\int (1-\beta(\tau)) \, \Hat \chi(\tau/T) e^{i\la\tau} \bigl(e^{- i\tau\sqd}\bigr)(x,y)\, d\tau
=K_0(x,y)+K_1(x,y).
\end{multline*}
Since we are assuming that the injectivity radius is ten or more and $\beta(\tau)=0$ for $|\tau|\ge2$ it is not difficult to estimate $K_0$.  In particular
since the Riemannian distance between $\gamma(t)$ and $\gamma(s)$ equals $|t-s|$, the proof of Lemma 5.1.3 in \cite{S3} shows that
there is a uniform constant $C$ so that for $\la$, $T>1$, we have
$$|K_0(\gamma(t),\gamma(s))|\le CT^{-1}\la^{\frac12}|t-s|^{-\frac12}.$$
Hence, by the Hardy-Littlewood fractional integral theorem we have
$$\Bigl(\hint \Bigl| \hint \Bigl| K_0(\gamma(t),\gamma(s))\, h(s) \, ds\Bigr|^4\, dt \Bigr)^{\frac14}
\le CT^{-1}\la^{\frac12}\|h\rnorm,$$
which is better than the bounds posited in \eqref{3.3}.
As a result, if $W_\la$ is the integral operator associated with restricting $K_1$ to $\gamma\times\gamma$, i.e.,
\begin{equation*}
W_\la h(t)=\frac1{2\pi T}\int_{-\infty}^\infty \hint (1-\beta(\tau)) \, \Hat \chi(\tau/T) e^{i\la\tau} \bigl(e^{- i\tau\sqd}\bigr)(\gamma(t),\gamma(s))\, h(s) \, ds d\tau,
\end{equation*}
then it suffices to show that
\begin{equation*}
\| W_\la h\lnorm 
\le CT^{-\frac12}\la^{\frac12}\|h\rnorm
+C_T\la^{\frac38}\|h\rnorm. 
\end{equation*}
Since the kernel of $\chi(T(\la+\sqd))$ is $O(\la^{-N})$ with constants independent of  $T, \la>1$, by Euler's formula, we would have this inequality if we
could show that
\begin{equation}\label{3.4}
\| S_\la h\lnorm 
\le CT^{-\frac12}\la^{\frac12}\|h\rnorm
+C_T\la^{\frac38}\|h\rnorm,
\end{equation}
with
\begin{equation}\label{3.5}
S_\la h(t)=\frac1{\pi T}\int_{-\infty}^\infty \hint (1-\beta(\tau)) \, \Hat \chi(\tau/T) e^{i\la\tau} \bigl(\cos \tau\sqd\bigr)(\gamma(t),\gamma(s))\, h(s) \, ds d\tau.
\end{equation}

It is at this point where we need to use our hypothesis that $(M,g)$ is of everywhere nowhere nonpositive curvature.  By a theorem of Hadamard
(see \cite{Chavel}, \cite{do Carmo}), for each point $p\in M$, the exponential map at $p$, $\exp_p$, sending the tangent space at $p$, $T_pM$ to $M$ is a universal
covering map.  
For us, it is natural for us to take this distinguished point to be the center of our geodesic segment.
Thus, we shall take  $p=\gamma(0)$ which defines the covering map that we shall use,
\begin{equation}\label{3.6}
\kappa = \exp_{\gamma(0)}: \Rtwo \simeq T_{\gamma(0)}M\to M.
\end{equation}

If $\tilde g=\kappa^*g$ denotes the pullback of the metric $g$ on $M$ to $\Rtwo$ it follows that $\kappa$ is a local isometry and straight lines through the origin are geodesics
for the metric $\tilde g$.  We may assume that $\kappa(t(1,0))=\gamma(t)$, $t\in \R$, where $\gamma\in M$ is the geodesic containing the geodesic segment $\{\gamma(t): \, 
|t|\le 1/2\}$ of our fixed element of $\Pi$.  In other words, we are identifying our segment with the part of the $x_1$ axis, $\{(t,0): \, |t|\le 1/2\}$ which lies in the interior of the Dirichlet domain associated  with $(M,g)$ and our distinguished point.

Next, we consider the deck transformations which are the set of diffeomorphisms $\alpha: \, \Rtwo \to \Rtwo$ having the property that
$$\kappa\circ \alpha = \kappa.$$
The set of these maps form a group which we shall denote by $\Gamma$.  Since $\alpha^*\tilde g=\tilde g$, $\alpha\in \Gamma$, it follows that any deck transformation
preserves distances and angles.  Hence, if $t\to \tilde \gamma_1(t)$, $t\in \R$, is a geodesic parameterized by arclength then so is its image $t\to \alpha(\tilde \gamma_1(t))$,
$t\in \R$, when $\alpha$ is a deck transformation.  Also, of course $M\simeq \Rtwo / \Gamma$.

The reason that we have brought these things up is that there is an analog of the classical Poisson summation formula for the torus which relates wave kernels
for $(M,g)$ to ones for $(\Rtwo, \tilde g)$, which will allow us to compute the kernel of our operator in \eqref{3.5}.  In particular, if as above $\tilde \gamma(t)=\{(t,0): \, |t|\le 1/2\}$ denotes
the lift of our geodesic segment in $(M,g)$ to the universal cover $(\Rtwo, \tidle g)$, we have
\begin{equation}\label{3.7}
\bigl(\cos \tau\sqd\bigr)(\gamma(t),\gamma(s))=\sum_{\alpha\in \Gamma}\bigl(\cos \tau \sqdt \bigr)(\tilde \gamma(t),\alpha(\tilde \gamma(s))), \quad |s|, |t|\le 1/2.
\end{equation}
Here $(\cos \tau \sqdt )(x,y)$, $x,y\in \Rtwo$, denotes the kernel of the cosine transform associated with the metric $\tilde g$, so that if $dV_{\tilde g}=(\det \tilde g_{jk}(x))^{\frac12}dx$
denotes the associated volume element and we set for $f\in C^\infty(\Rtwo)$
$$u(\tau,x)=\int_{\Rtwo} \bigl(\cos \tau\sqdt \bigr)(x,y)\, f(y)\, dV_{\tilde g}(y),$$
then $u$ is the unique solution of the following Cauchy problem associated with the d'Alembertian for $\tilde g$,
$$(\partial^2_\tau-\Delta_{\tilde g})u=0, \quad u(0,x)=f(x), \quad \partial_\tau u(0,x)=0.$$
Therefore, by the Huygens principle we have 
\begin{equation}\label{3.8}
\bigl(\cos \tau\sqdt \bigr)(x,y)=0 \quad \text{if } \, \, d_{\tilde g}(x,y)>\tau,
\end{equation}
with $d_{\tilde g}$ denoting the Riemannian distance coming from the metric $\tilde g$.

Using this formula we can rewrite the operator in \eqref{3.5} that remains to be studied.  Specifically, for $|t|\le 1/2$, we have
$$S_\la h(t)=\frac1{\pi T}\sum_{\alpha\in \Gamma}\int_{-\infty}^\infty \hint (1-\beta(\tau)) \, \Hat \chi(\tau/T) e^{i\la\tau} 
\bigl(\cos \tau\sqdt\bigr)(\tilde \gamma(t), \alpha(\tilde \gamma(s)))\, h(s) \, ds d\tau.$$
We shall be able to use the Hadamard parametrix for $(\Rtwo, \tilde g)$ to compute each summand with the necessary precision.  The arguments from the last section suggest that when we do so if $\{\alpha(\tilde \gamma(s)): \, |s|\le 1/2\}$ is contained in the $x_1$-axis (i.e., the infinite geodesic
$\tilde \gamma = \{(t,0): \, t\in \R\}$ for $\tilde g$), then the resulting integral operator will have trivial oscillations.  The set of all such $\alpha$
is the stabilizer group for $\tilde \gamma$, $\Stab$, of all $\alpha \in \Gamma$ so that $\alpha(\tilde \gamma)=\tilde \gamma$.  This is a 
cyclic subgroup of $\Gamma$ which is just the identity if the base geodesic in $M$, $t\to \gamma(t)$, $t\in \R$ is not periodic (which is the
typical case).

As a result of these considerations we write
$$S_\la h(t)=\Sstab h(t) + \Sosc h(t),\quad |t|\le 1/2,$$
where for $|t|\le 1/2$
\begin{multline}\label{3.9}\Sstab h(t)= 
\\
\frac1{\pi T}\sum_{\alpha\in \Stab}\int_{-\infty}^\infty \hint (1-\beta(\tau)) \, \Hat \chi(\tau/T) e^{i\la\tau} 
\bigl(\cos \tau \sqdt\bigr)(\tilde \gamma(t), \alpha(\tilde \gamma(s)))\, h(s) \, ds d\tau,
\end{multline}
and
\begin{multline}\label{3.10}\Sosc h(t)= 
\\
\frac1{\pi T}\sum_{\alpha\in \Gamma \backslash \Stab}\int_{-\infty}^\infty \hint (1-\beta(\tau)) \, \Hat \chi(\tau/T) e^{i\la\tau} 
\bigl(\cos \tau\sqdt\bigr)(\tilde \gamma(t), \alpha(\tilde \gamma(s)))\, h(s) \, ds d\tau.
\end{multline}
For a generic geodesic segment $\gamma\in \Pi$, as we mentioned before, the sum in \eqref{3.9} will just consist of one term,  $\alpha=Identity$, which
will be trivial for $|t|\le 1/2$ since the integrand vanishes when $|\tau|\le 3/2$.  Even if $\gamma$ is part of a periodic geodesic in $M$, as we shall see
it follows from \eqref{3.8} that the number of nonzero summands is $O(T)$.  It will essentially be controlled by the first term in the right side of \eqref{3.4}.
The other sum, \eqref{3.10}, will have a number of nonzero
terms which grows exponentially in $T$, though, which could create problems.  Fortunately, each such term is an oscillatory integral and we
shall be able use a simple geometric lemma and oscillatory integral estimates from \cite{Hor}, \cite{GS} and \cite{PS0}
to prove that it is controlled by the second term in the right side of \eqref{3.4}, which has a constant that is badly behaved in $T$, but an
improved power of $\lambda$.

To prove these assertions we need the following simple lemma which follows from the Hadamard parametrix and stationary phase:

\begin{lemma}\label{lemma3.1}
Given $\alpha\in \Gamma$ set
\begin{multline}\label{3.11}
K^\gamma_{T,\lambda,\alpha}(t,s)
\\
=\frac1{\pi T}\int_{-\infty}^\infty (1-\beta(\tau)) \, \Hat \chi(\tau/T) e^{i\la\tau} 
\bigl(\cos \tau \sqdt\bigr)(\tilde \gamma(t), \alpha(\tilde \gamma(s)))\, d\tau, \, \, |s|, |t|\le 1/2.
\end{multline}
We then have for a constant $C_T$ independent of $\gamma$ and $\la, T>1$
\begin{equation}\label{3.12}
|K^\gamma_{T,\la,\alpha}(t,s)|\le C_{T,N}\la^{-N} \quad \text{if } \, \alpha=Identity, \, \, |s|, |t|\le 1/2, \quad N\in {\mathbb N}.
\end{equation}
While if $\alpha\ne Identity$ and we set
\begin{equation}\label{3.13}
\phi_{\gamma,\alpha}(t,s)=d_{\tilde g}(\tilde \gamma(t),\alpha(\tilde \gamma(s))), \quad |s|, |t|\le 1/2,
\end{equation}
then we can write for $|s|, |t|\le 1/2$
\begin{equation}\label{3.14}
K^\gamma_{T,\la,\alpha}(t,s)=
w(\tilde \gamma(t),\alpha(\tilde \gamma(s)))
\sum_\pm a_\pm(T,\la; \,  \phi_{\gamma,\alpha}(t,s))e^{\pm i\la \phi_{\gamma,\alpha}(t,s)}
+R^\gamma_{T,\la,\alpha}(t,s),
\end{equation}
where $w(x,y)$ is a smooth bounded function on $\Rtwo\times\Rtwo$ and where for each $j=0,1,2,\dots$ there is a constant
$C_j$ independent of $T,\la\ge1$ so that
\begin{equation}\label{3.15}
|\partial_r^ja_\pm(T,\la; \, r)|\le C_jT^{-1}\la^{\frac12}r^{-\frac12-j}, \quad r\ge 1,
\end{equation}
and for a constant $C_T$ which is independent of $\gamma$, $\alpha$ and $\la$
\begin{equation}\label{3.16}
|R_{T,\la,\alpha}^\gamma(t,s)|\le C_T, \quad \text{if } \, K^\gamma_{T,\la,\alpha}\not\equiv 0.
\end{equation}
Additionally,  for each $j=0,1,2,\dots$ 
and $k=0,1,2,\dots$ we have
 \begin{equation}\label{3.18}
  |\partial_t^j\partial_s^k \phi_{\gamma,\alpha}(t,s)|
 \le C_{T,j,k}, \quad \text{if } \, \alpha \ne Identity, \, \text{and } \, K^\gamma_{T,\la,\alpha}\not\equiv0.
 \end{equation}
\end{lemma}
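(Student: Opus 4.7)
The overall strategy is to combine the Hadamard parametrix for the cosine wave kernel on the universal cover $(\Rtwo,\tilde g)$, which by the Cartan--Hadamard theorem has infinite injectivity radius and hence admits a globally valid parametrix, with a stationary phase analysis in the $(\tau,\xi)$ variables. Throughout, the factor $(1-\beta(\tau))$ restricts $\tau$ to $|\tau|\ge 3/2$, and finite propagation speed says $(\cos\tau\sqdt)(x,y)$ vanishes for $|\tau|<d_{\tilde g}(x,y)$, so only two regimes arise: the ``short'' distance $d_{\tilde g}(\tilde\gamma(t),\alpha(\tilde\gamma(s)))\le 1$ (forcing $\alpha=Identity$, by our assumption that the injectivity radius is $\ge 10$) and the ``long'' distance $\phi_{\gamma,\alpha}(t,s)\ge 8$ (for $\alpha\ne Identity$).

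For the bound \eqref{3.12}, I would exploit the mismatch between the temporal support of $(1-\beta(\tau))\hat\chi(\tau/T)$ and the spatial distance. When $\alpha=Identity$ and $|s|,|t|\le 1/2$, the singular support in $\tau$ of $(\cos\tau\sqdt)(\tilde\gamma(t),\tilde\gamma(s))$ sits at $|\tau|=|t-s|\le 1$, which lies in the interior of $\{|\tau|<3/2\}$ where $1-\beta\equiv 0$. Consequently, the full $\tau$-integrand in \eqref{3.11} is a smooth, compactly supported function of $\tau$, with derivatives bounded by $T$-dependent constants (the $k$-th derivative of $\hat\chi(\tau/T)$ contributes at worst $T^{-k}$, and the $\tau$-derivatives of the Hadamard parametrix are controlled on $|\tau|\ge 3/2 > |t-s|$). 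Integrating by parts $N$ times in $\tau$ against $e^{i\lambda\tau}$ then yields \eqref{3.12}.

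For \eqref{3.14}--\eqref{3.16}, insert the Hadamard expansion
\[
(\cos\tau\sqdt)(x,y)=(2\pi)^{-2}w(x,y)\sum_\pm \int_{\Rtwo} e^{i\Phi(x,y,\xi)\pm i\tau|\xi|}\,b_\nu(\tau,x,y;|\xi|)\,d\xi+\text{l.o.t.},
\]
where $\Phi(x,y,\xi)=\langle\exp_y^{-1}(x),\xi\rangle$ in geodesic normal coordinates about $y=\alpha(\tilde\gamma(s))$ (so $|\Phi|\sim|\xi|\,\phi_{\gamma,\alpha}(t,s)$ on the critical set). The $\tau$-integration against $(1-\beta(\tau))\hat\chi(\tau/T)e^{i\lambda\tau}$ is the Fourier transform of a Schwartz cutoff and forces localization at $|\xi|\approx\lambda$ of width $T^{-1}$ (the $+$ sign, with phase $\tau(\lambda+|\xi|)$, is non-stationary and contributes only $O(\lambda^{-N})$ to the remainder; the ``$\beta$ correction'' is handled analogously and absorbed into $R^\gamma$). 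Passing to polar coordinates $\xi=r\omega$ with $\omega\in S^1$ and applying the method of stationary phase to the angular integral $\int_{S^1} e^{ir\phi_{\gamma,\alpha}(t,s)\,\omega_1}\,d\omega$ (legitimate since $r\phi_{\gamma,\alpha}\gtrsim\lambda\ge 1$) produces two Bessel-type stationary points contributing $(r\phi_{\gamma,\alpha})^{-1/2}\,e^{\pm i r\phi_{\gamma,\alpha}}$ along with full asymptotic expansions in $(r\phi_{\gamma,\alpha})^{-1}$. Collecting the factors $T^{-1}$ (prefactor), the Jacobian $r$ and the $T^{-1}$-width localization at $r=\lambda$ (yielding the remaining $\lambda^{1/2}\cdot T^{-1}\cdot T=\lambda^{1/2}$), and the angular $r^{-1/2}\phi_{\gamma,\alpha}^{-1/2}$, gives \eqref{3.14}--\eqref{3.15}, with the lower-order Hadamard terms and the $\beta$ correction contributing a bounded remainder \eqref{3.16}; the bound depends on $T$ since the $\tau$-window is of length $\sim T$.

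Finally, \eqref{3.18} follows from the fact that $\phi_{\gamma,\alpha}$ is smooth away from the diagonal of the distance function on $(\Rtwo,\tilde g)$, which is automatic since $\phi_{\gamma,\alpha}(t,s)\ge 8>0$ on our domain, combined with standard Jacobi field comparison (Rauch) in nonpositive curvature to control the derivatives in terms of $\phi_{\gamma,\alpha}$; since $K^\gamma_{T,\lambda,\alpha}\not\equiv 0$ forces $\phi_{\gamma,\alpha}\le 2T$, all derivatives are bounded by constants depending only on $T$ and the order $(j,k)$. The main technical obstacle is bookkeeping in the stationary phase argument for \eqref{3.14}--\eqref{3.15}: separating out the principal contribution from the full Hadamard expansion, verifying that the remainder from the subprincipal amplitudes and from the $\beta$ correction is genuinely bounded uniformly in $\alpha$ (with only $T$-dependence), and checking that the symbol estimates on $a_\pm$ hold with the claimed $r^{-1/2-j}$ decay and not merely $r^{-1/2}$ times smoothness.
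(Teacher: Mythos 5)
Your proposal follows the paper's approach essentially step for step: finite propagation speed together with the support restriction $|\tau|\ge 3/2$ isolates $\alpha = \mathrm{Identity}$ and gives \eqref{3.12} by non-stationary phase; the Hadamard parametrix on $(\Rtwo,\tilde g)$ combined with the Bessel-type asymptotics $\widehat{d\theta}(y)\sim |y|^{-1/2}\sum_\pm a_\pm(|y|)e^{\pm i|y|}$ and the $T^{-1}$-width localization at $|\xi|\approx\la$ from the $\tau$-integral yields \eqref{3.14}--\eqref{3.16}; and \eqref{3.18} comes from smoothness of $d_{\tilde g}$ away from the diagonal plus the finite-speed bound $\phi_{\gamma,\alpha}\le 2T$. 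This is the same argument the paper gives (the paper first computes the flat model kernels $K_0(|x|)$, $K_1(|x|)$ via \eqref{ftcirc} and then substitutes them through the parametrix, whereas you insert the parametrix first, but the computation is the same). The only blemish is that the bookkeeping sentence collecting the powers of $T$ and $\la$ is garbled as written and should be laid out more carefully, though you do arrive at the correct symbol estimate \eqref{3.15}.
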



\begin{proof}
Given $x,y\in \Rtwo$ and $\tau\in \R$, $(\cos \tau\sqdt)(x,y)$ is smooth near $(\tau,x,y)$ if $d_{\tilde g}(x,y)\ne |\tau|$.  This implies \eqref{3.12}
since $(1-\beta(\tau))=0$ for $|\tau|\le 3/2$ and $d_{\tilde g}(\tilde \gamma(t),\tilde \gamma(s))\le 1$ if $|t|, |s|\le 1/2$.  Note also that
\begin{equation}\label{3.19}
d_{\tilde g}(\tilde \gamma(t),\alpha(\tilde \gamma(s)))\ge2 \quad \text{if } \, |t|, |s|\le 1/2 \, \, \text{and } \, \, \alpha\ne Identity
\end{equation}
since we are assuming that the injectivity radius of $(M,g)$ is larger than ten and $\alpha$ is a deck transformation.

To handle the nonidentity terms, let for $x\in \Rtwo$, $|x|\ge 1$,
\begin{align*}
K_0(|x|)&=\frac1{\pi T}\int_{\Rtwo}\int_{-\infty}^\infty \Hat \chi(\tau/T)e^{i\la \tau}\cos(\tau |\xi|) e^{ix\cdot \xi}\, d\tau d\xi
\\
&=\int_{\Rtwo}\chi(T(\la-|\xi|))e^{ix\cdot \xi}\, d\xi + \int_{\Rtwo}\chi(T(\la+|\xi|))e^{ix\cdot \xi}\, d\xi,
\end{align*}
and also let $\Phi_T(t)\in {\mathcal S}(\R)$ be defined by its Fourier transform, $\Hat \Phi_T(\tau)=\beta(\tau)\Hat \chi(\tau/T)$ and put
\begin{align*}
K_1(|x|)&=\frac1{\pi T}\int_{\Rtwo}\int_{-\infty}^\infty \beta(\tau) \Hat \chi(\tau/T)e^{i\la \tau}\cos(\tau |\xi|) e^{ix\cdot \xi}\, d\tau d\xi
\\
&=\frac1T\int_{\Rtwo}\Phi_T(\la-|\xi|)e^{ix\cdot \xi}\, d\xi + \frac1T\int_{\Rtwo}\Phi_T(\la+|\xi|)e^{ix\cdot \xi}\, d\xi.
\end{align*}
Recall that the Fourier transform of the induced Lebesgue measure on the unit circle in $\Rtwo$ is of the form
\begin{equation}\label{ftcirc}\widehat{d\theta}(y)=\int_{S^1}e^{i y\cdot (\cos \theta,\sin \theta)}\, d\theta
=|y|^{-\frac12}\sum_{\pm}a_\pm(|y|)e^{\pm i|y|}, \quad |y|\ge 1,
\end{equation}
where for each $j=0,1,2,\dots$
$$|\partial_r^j a_\pm(r)|\le C_jr^{-j}, \quad r\ge 1.$$
Also $\widehat{d\theta}\in C^\infty(\Rtwo)$ of course.  Thus, if $\la, T\ge1$, modulo a term
which is $O((\la|x|)^{-N}T^{-1})$ for any $N$ independent of $T$, we have
\begin{align}\label{3.20}
K_0(|x|)&=\frac1{|x|^{\frac12}}\int_0^\infty \chi(T(\la-r))\sum_{\pm}a_\pm(|x|r)e^{\pm ir|x|}r^{\frac12}\, dr
\\
&=\frac{\la^{\frac12}}{|x|^{\frac12}}\sum_\pm b_{\pm}(T,\la; |x|)e^{\pm i \la |x|},
\notag
\end{align}
where it is easy to check that
\begin{equation}\label{3.21}
|\partial_r^jb_\pm(T,\la;r)|\le C_jT^{-1}r^{-j}, \quad r, \la, T\ge 1, \, \, j=0,1,\dots.
\end{equation}
Similar arguments show that, modulo an $O((\la|x|)^{-N}T^{-1})$ error we also have
\begin{equation}\label{3.23}
K_1(|x|)=\frac{\la^{\frac12}}{|x|^{\frac12}}\sum_\pm \tilde b_\pm(T,\la,|x|)e^{\pm i\la |x|}, 
\end{equation}
with
\begin{equation}\label{3.24}
|\partial_r^j\tilde b_\pm(T,\la, r)|\le C_jT^{-1}r^{-j}, \quad r, \la, T\ge 1,  \, \, j=0,1,2\dots.
\end{equation}

To use these stationary phase calculations and obtain the remaining parts of the lemma, \eqref{3.13}--\eqref{3.18}, as we did in the last section, we shall use the Hadamard parametrix.  This time, though, we naturally will use it for $(\Rtwo, \tilde g)$.  For $x,y\in \Rtwo$ we now note that we can write
\begin{multline}\label{3.26}
\bigl(\cos \tau \sqdt\bigr)(x,y)=(2\pi)^{-2}w(x,y)\int_{\Rtwo}e^{i d_{\tilde g}(x,y)\xi_1 } \cos(\tau |\xi|)\, d\xi
\\
+\sum_{\pm}\int_{\Rtwo}e^{id_{\tilde g}(x,y)\xi_1} e^{\pm i\tau|\xi|} a_{\pm}(\tau, x,y,|\xi|) \, d\xi +R(\tau,x,y),
\end{multline}
where we can take the ``remainder'' to satisfy
$$|R(\tau,x,y)|\le C_T, \quad \text{if } \, |\tau|\le 2T,$$
and $a_\pm$ is a symbol of order two which in particular satisfies
$$|\partial^j_\tau a_\pm(\tau, x,y, |\xi|)|\le C_{T,j}(1+|\xi|)^{-2}, \quad \text{if } \, |\tau|\le 2T, \, \, \, j=0,1,2,\dots,$$
and where the leading coefficient $w$ is smooth and nonnegative and satisfies
$$w(x,y)\le C$$
independent of $x,y\in \Rtwo$, by volume comparison theorems (see \cite{Berard} and \cite{SZ3}).

Clearly if we replace $\bigl(\cos \tau \sqdt\bigr)(\tilde \gamma(t), \alpha(\tilde \gamma(s)))$ by $R$ in \eqref{3.11} we will obtain something which
is bounded by a constant which will depend on $T$ but can be taken to be independent of $\la>1$.  In the previous section we argued that
the contribution of the term involving the symbol of order $-2$ leads to a bounded quantity.  Now since we are working in two-dimensions
instead of three-dimensions, we can repeat those arguments to see that this term leads to something that is actually
$\le C_T\la^{-1}$ if $\la, T\ge 1$, which is much better than we need for the main term.

If we take $x=\tilde \gamma(t)$ and $y=\alpha(\tilde \gamma(s))$, $|t|, |s|\le 1/2$,  for the first term in the right side of \eqref{3.26} and replace the
cosine-transform kernel in \eqref{3.11} by this expression then we will exactly obtain $(2\pi)^2$ times
$$w(\tilde \gamma(t),\alpha(\tilde \gamma(s)))\, K_0(d_{\tilde g}(\tilde \gamma(t),\alpha(\tilde \gamma(s))) - w(\tilde \gamma(t),\alpha(\tilde \gamma(s)))\, K_1(d_{\tilde g}(\tilde \gamma(t), \alpha(\tilde \gamma(s))).$$
As a result, \eqref{3.13}-\eqref{3.16} follows from \eqref{3.19}-\eqref{3.23}.  
Since each deck transformation is smooth,
the remaining part of the lemma, \eqref{3.18},
follows from the above, the chain rule, \eqref{3.19} and the fact that the Riemannian distance
function is smooth away from the diagonal.
\end{proof}

Let us now use this lemma to estimate $\Sstab$ defined in \eqref{3.9}.  If $\alpha_1, \alpha_2\in \Stab$ and $\alpha_1\ne \alpha_2$ then the two geodesic segments $\tilde \gamma_1 = \{\alpha_1(\tilde \gamma(t)): \, |t|\le 1/2\}$ and $\tilde \gamma_2 = \{\alpha_2(\tilde \gamma(t)): \, |t|\le 1/2\}$ are disjoint since the injectivity radius of $(M,g)$ is ten or more and the covering map is defined by the exponential map at $\gamma(0)$.  Since $\Stab$ fixes the first coordinate axis and is an isometry, it follows that $\tilde \gamma_1$ and $\tilde \gamma_2$ are
disjoint unit length segments of this axis.  Since $\Hat \chi(\tau/T)=0$ if $|\tau|\ge T$, by \eqref{3.8}, the summands in \eqref{3.9} vanish when 
$$\{\alpha(\tilde \gamma(s)): \, |s|\le 1/2\}\cap \{(t,0): \, |t|\le T+1\}=\emptyset.$$
Using these facts along with \eqref{3.12} and \eqref{3.14}-\eqref{3.16}, we conclude that for $|t|, |s|\le1/2$ the kernel $K^{\text{Stab}}_\la$ of $\Sstab$ satisfies
\begin{equation}\label{kest}
|K^{\text{Stab}}_\la(t,s)|\le CT^{-1}\la^{\frac12}\sum_{0\le j\le T+1}(1+j)^{-\frac12}+C_T\le C\la^{\frac12}T^{-\frac12}+C_T.
\end{equation}
Consequently, 
\begin{equation}\label{3.27}
\|\Sstab h\|_{L^4([-\frac12,\frac12])}\le \bigl(C\la^{\frac12}T^{-\frac12}+C_T\bigr)\, \|h\|_{L^{\frac43}([-\frac12,\frac12])},
\end{equation}
which is better than the bounds posited in \eqref{3.4} for the whole operator.

Based on this, in order to obtain \eqref{3.4} it is enough to show that for the remaining piece of $S_\la$ we have
\begin{equation}\label{3.28}
\|\Sosc  h\|_{L^4([-\frac12,\frac12])}\le C_T\la^{\frac38} \|h\|_{L^{\frac43}([-\frac12,\frac12])}.
\end{equation}
To do this, we shall show using \eqref{3.14} and the following lemma that, modulo a trivial error, each of the nonzero summands in \eqref{3.10} is a well-behaved oscillatory integral operator.

\begin{lemma}\label{lemma3.2}  Let $\gamma_1=\{ \gamma_1(t), t\in \R\}$ and $\gamma_2=\{\gamma_2(s), s\in \R\}$ be two {\em distinct} geodesics,
each parameterized by arclength,  on ${\mathbb R}^2$ endowed with a Riemannian metric $\tilde g$ of everywhere nonpositive curvature.     If $d_{\tilde g}$ denotes the Riemannian distance function associated to the metric put
$\phi(t,s)=d_{\tilde g}(\gamma_1(t),\gamma_2(s))$.
It then follows that
\begin{equation}\label{a}
\phi''_{ts}(t,s)\ne 0, \, \, \, \text{for all } \, \, t,s\in \R, \quad \text{if } \, \, \gamma_1\cap \gamma_2=\emptyset.
\end{equation}
If $\gamma_1\cap \gamma_2\ne \emptyset$ then there is a unique point $x_0(\gamma_1,\gamma_2)$ such that $\gamma_1\cap \gamma_2=\{x_0(\gamma_1,\gamma_2)\}$.
Then if $t_0$ and $s_0$ are the unique numbers such that $\gamma_1(t_0)=x_0(\gamma_1,\gamma_2)$ and $\gamma_2(s_0)=x_0(\gamma_1,\gamma_2)$, it follows that
\begin{equation}\label{b}
\phi''_{st}(t,s)\ne 0 \quad \text{if } \, \, t\ne t_0 \, \, \text{and } \, s\ne s_0.
\end{equation}
Furthermore, the map
\begin{equation}\label{c}
(t,s)=\Pi(t,s)=(t,\phi'_t(t,s))
\end{equation}
has a folding singularity when $t\in \R\backslash \{t_0\}$ and $s=s_0$.
\end{lemma}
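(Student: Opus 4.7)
The strategy is to express $\phi''_{ts}$ through a Jacobi-field computation, producing a clean product formula in terms of the angles at which $\gamma_1,\gamma_2$ meet the connecting geodesic, and then to read off all three assertions using Cartan--Hadamard. Since $(\Rtwo,\tilde g)$ is simply connected with $K\le 0$, Cartan--Hadamard gives that $\exp_p$ is a diffeomorphism for every $p\in\Rtwo$; consequently any two points are joined by a unique geodesic, $d_{\tilde g}$ is smooth off the diagonal, and two distinct geodesics meet at most once and transversely.

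\textbf{Jacobi-field identity.} For each $(t,s)$ let $\sigma_{t,s}$ denote the unit-speed geodesic from $\gamma_1(t)$ to $\gamma_2(s)$, with initial tangent $V_1(t,s)$ and terminal tangent $W_2(t,s)$; write $\theta_1$ (respectively $\theta_2$) for the angle between $\gamma_1'(t)$ and $V_1$ (respectively between $\gamma_2'(s)$ and $W_2$). The first variation formula gives
\begin{equation*}
\phi'_t(t,s)=-\cos\theta_1(t,s),\qquad \phi'_s(t,s)=\cos\theta_2(t,s).
\end{equation*}
Differentiating $\phi'_t$ in $s$ and commuting covariant derivatives along the variation $\sigma(r,s)=\exp_{\gamma_1(t)}(rV_1(t,s))$ yields $\phi''_{ts}=-\langle\gamma_1'(t),J'(0)\rangle$, where $J$ is the Jacobi field along $\sigma_{t,s}$ with $J(0)=0$ and $J(\phi(t,s))=\gamma_2'(s)-\phi'_s\,\dot\sigma(\phi)$. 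In two dimensions the tangential part of $J$ is affine and vanishes at both endpoints, so $J=yN$ for a parallel unit normal $N$, where the scalar $y$ solves $y''+Ky=0$ with $y(0)=0$ and $y(\phi)=\pm\sin\theta_2$. Writing $u$ for the fundamental solution ($u(0)=0$, $u'(0)=1$) one gets $y'(0)=\pm\sin\theta_2/u(\phi)$ and therefore
\begin{equation*}
\phi''_{ts}(t,s)=\mp\,\frac{\sin\theta_1(t,s)\,\sin\theta_2(t,s)}{u\bigl(\phi(t,s)\bigr)}.
\end{equation*}
Nonpositive curvature forces $u''=-Ku\ge 0$ whenever $u\ge 0$, so $u$ is convex and strictly increasing on $[0,\infty)$ and $u(\phi)>0$.

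\textbf{Zeros of $\phi''_{ts}$.} From the product formula, $\phi''_{ts}(t,s)=0$ if and only if $\sin\theta_1=0$ or $\sin\theta_2=0$. But $\sin\theta_1=0$ forces $\sigma_{t,s}$ to coincide with a segment of $\gamma_1$, hence $\gamma_2(s)\in\gamma_1$; in the disjoint case this is impossible, proving \eqref{a}, and in the intersecting case it forces $s=s_0$. Symmetrically $\sin\theta_2=0$ only when $t=t_0$, which yields \eqref{b}.

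\textbf{Fold at $t\ne t_0$, $s=s_0$, and main obstacle.} Here $\det D\Pi=\phi''_{ts}$, the critical set near $(t,s_0)$ is the horizontal line $\{s=s_0\}$, and $\ker D\Pi(t,s_0)=\mathrm{span}(\partial_s)$; the kernel is transverse to the critical set, so to confirm a Whitney fold it suffices to check $\partial_s\phi''_{ts}(t,s_0)\ne0$. Since $\sin\theta_1$ vanishes at $s=s_0$ while $\sin\theta_2(t,s_0)\ne 0$---the geodesic $\sigma_{t,s_0}$ runs along $\gamma_1$ and terminates at $x_0$, where $\gamma_2$ is transverse to $\gamma_1$---only $\partial_s\sin\theta_1$ contributes. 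Applying the Jacobi-field identity once more gives $\partial_s V_1(t,s_0)=J'(0)=y'(0)\,V_1^\perp$, so $\partial_s\theta_1(t,s_0)=y'(0)=\pm\sin\theta_2(t,s_0)/u(\phi(t,s_0))\ne 0$, as required. The genuinely delicate point is the bookkeeping in the Jacobi-field identity---getting the boundary data for $J$ right at $r=\phi$ and separating tangential from normal parts cleanly so that the product formula emerges; once the formula $\phi''_{ts}=\mp\sin\theta_1\sin\theta_2/u(\phi)$ is in hand, both the case analysis and the fold verification follow immediately from Cartan--Hadamard and transversality.
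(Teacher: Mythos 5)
Your proposal is correct, and it takes a genuinely different route from the paper. Where the paper works in geodesic normal coordinates centered at a point of $\gamma_1$, writes $\gamma_2(s)=(x_1(s),x_2(s))$ in those coordinates, and derives everything from the explicit Gauss-lemma identity $\phi'_t(0,s)=-x_1(s)/\sqrt{x_1^2(s)+x_2^2(s)}$ (which is exactly $-\cos\theta(s)$ in angular coordinates), you instead run a fully intrinsic second-variation argument: the first variation gives $\phi'_t=-\cos\theta_1$, $\phi'_s=\cos\theta_2$, and a Jacobi-field computation along the connecting geodesic then produces the closed product formula $\phi''_{ts}=\mp\sin\theta_1\sin\theta_2/u(\phi)$, with $u$ the fundamental solution of $u''+Ku=0$. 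The nonpositive-curvature hypothesis enters as convexity of $u$, forcing $u(\phi)>0$, and all three assertions of the lemma—including the fold at $(t,s_0)$, which reduces to $\partial_s\sin\theta_1(t,s_0)\ne0$, i.e.\ $y'(0)\ne0$—drop out of the sign/vanishing analysis of the two sine factors and Cartan--Hadamard uniqueness. What the paper's coordinate approach buys is elementary and self-contained algebra with no appeal to Jacobi-field formalism, while your approach buys a clean, orientation-free closed formula and makes the role of curvature (through $u$) explicit; the latter also generalizes more transparently to higher dimensions, which is relevant for Lemma~\ref{lemma4.1}. The one place you flag as ``delicate''—getting $J(\phi)=\gamma_2'(s)-\phi'_s\,\dot\sigma(\phi)$ and isolating the normal scalar $y$—is indeed where the bookkeeping lives, but your chain-rule derivation is correct, and in two dimensions the tangential part of $J$ is forced to vanish by its endpoint data, so the reduction to the scalar ODE is clean.
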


\begin{proof}  By our curvature assumptions and a theorem of Hadamard (see, e.g. \cite{do Carmo}), for every point $p\in {\mathbb R}^2$, the exponential map associated
with $\tilde g$  from the tangent space at $p$ to ${\mathbb R}^2$ is a diffeomorphism.  Hence, $\gamma_1$ and $\gamma_2$ can intersect at most one point.

To prove the facts about the mixed Hessian of $\phi$, we may assume that $p\in \gamma_1$ and we may work in geodesic normal coordinates about $p$ so that
$\gamma_1$ is the $x_1$-axis, i.e.,
$$\gamma_1(t)=(t,0), \quad t\in \R.$$
We then write the second geodesic
$$\gamma_2(s)=(x_1(s),x_2(s))$$
in these coordinates.

If for a given $s$, $ \gamma_2(s)\notin \gamma_1$, it follows that $x_2(s)\ne 0$.  We shall show that if
$$\phi(t,s)=d_{\tilde g}\bigl((t,0),(x_1(s),x_2(s)\bigr),$$
then
\begin{equation}\label{d}
\phi''_{ts}(0,s)\ne 0, \quad \text{if } \, \, x_2(s)\ne 0 \, \, \text{and } \, (0,0)\notin \gamma_2,
\end{equation}
which implies all but the last part of the lemma since $p$ was chosen to be an arbitrary point on $\gamma_1$.

To prove \eqref{d}, since we are working in geodesic normal coordinates about the origin, we have
\begin{equation}\label{e}
\phi_t'(0,s)=
=\frac{-x_1(s)}{\sqrt{x_1^2(s)+x_2^2(s)}}.
\end{equation}
If $x_1(s)=0$ then $\dot x_1(s)=dx_1(s)/ds\ne 0$, since otherwise $\dot x(s)=(0,\pm 1)$, which would mean that $\gamma_2$ is the geodesic
$\{(0,s), \, s\in \R\}$ and contradict our assumption in \eqref{d} that $(0,0)\notin \gamma_2$.  Note that the $x_2$-axis is a geodesic since we are working in geodesic normal coordinates
about the origin.

Since we are assuming for now that $x_2(s)\ne 0$, we see from \eqref{e} that $\phi''_{ts}(0,s)\ne 0$ if $x_1(s)=0$ and $(0,0)\notin \gamma_2$.  The remaining case that we need
to consider in order to establish \eqref{a} and \eqref{b} is the case where $x_1(s)\ne 0$, $x_2(s)\ne 0$ and, again, $(0,0)\notin \gamma_2$.  In this case, by \eqref{e}, we can write
$$\phi'_t(0,s)=\frac{\pm 1}{\sqrt{1+(x_2(s)/x_1(s))^2}},$$
and so
$$\phi''_{ts}(0,s)=\frac{\pm x_2(s)/x_1(s)}{[1+(x_2(s)/x_1(s))^2]^{3/2}}\times \frac{d}{ds}\Bigl(\, \frac{x_2(s)}{x_1(s)}\, \Bigr).$$
Therefore, under our current assumption that $x_1(s)\ne 0$ and $x_2(s)\ne 0$, we have
$$\phi''_{st}(0,s)=0 \, \, \iff \, \, \frac{x_1(s)\dot x_2(s)-\dot x_1(s)x_2(s)}{x_1^2(s)}=0.$$
If the last term vanished then one of $\pm(\dot x_1(s), \dot x_2(s))$ would have to point in the same direction as $(x_1(s),x_2(s))$, meaning that $\gamma_2$ is the geodesic which is the
line through the origin and $(x_1(s),x_2(s))$, which is ruled out by our assumption that $(0,0)\notin \gamma_2$ in \eqref{d}.  This completes the proof of \eqref{d}, which finishes the proof
of all parts of the lemma except for the assertion about the map $\Pi$ in \eqref{c}.

To prove the last part of the lemma (which is just a manifestation of the Gauss lemma in Riemannian geometry), we may assume that $\gamma_1(0)\notin \gamma_1\cap \gamma_2$ and, as before, work in geodesic normal coordinates about $\gamma_1(0)$
so that $\gamma_1(t)=(t,0)$ and, in particular $\gamma_1(0)$ is the origin.  We then must show that if $\gamma_2(s)=(x_1(s),x_2(s))$ is as before then the map
\begin{equation}\label{f}
(t,s)\to \Pi(t,s)=(t,\phi'_t(t,s))
\end{equation}
has a fold singularity at $(0,s_0)$ if
\begin{equation}\label{g}
x_2(s_0)=0, \, \, \, x_1(s_0)\ne 0, \, \, \text{and } \, \dot x_2(s_0)\ne 0.
\end{equation}
Note that the assumption that $\dot x_2(s_0)\ne 0$ is equivalent to our assumption that $\gamma_1\ne \gamma_2$ if $\gamma_2(s_0)=(x_1(s_0),0)$.

By what we just did we have that
$$\phi''_{ts}(t,s_0)\equiv 0 \quad \text{for } \, \, t \, \, \text{near } \, \, 0,$$
and if $d\Pi$ denotes the differential of the map $\Pi$, i.e.,
\begin{equation}\label{h}
d\Pi(t,s)=\left(\begin{array}{cc}
1 & 0
\\ \\
\phi''_{tt}(t,s) & \phi''_{ts}(t,s)
\end{array}\right),
\end{equation}
then, by \eqref{b}, 
$$\det \bigl(d\Pi(t,s)\bigr) = \phi''_{ts}(t,s)\ne 0, \quad \text{if } \, \, s\ne s_0 \, \, \text{and } \, \, t \, \, \text{near } \, \, 0.$$
In other words, the singular variety $\Sigma$ of all $(t,s)$ $t$ near $0$ and $s\in \R$ where $\det(d\Pi(t,s))=0$ is the set of pairs where we have $s=s_0$.  By \eqref{h}, the kernel
of $d\Pi$ if $(t,s)\in \Sigma$ is the one-dimensional space spanned by $(0,1)$, which means that the kernel of $d\Pi$ is transversal to $\Sigma$.  
As a result, 
$(t,s)\to \Pi(t,s)=(t,\phi'_t(t,s))$ has a fold singularity at $(0,s_0)$ if and only if
\begin{equation}\label{i}
\phi'''_{tss}(0,s_0)\ne 0.
\end{equation}
By \eqref{e}, for $s$ near $s_0$ the map
\begin{equation}\label{j}
s\to \frac{\partial \phi(0,s)}{\partial t}=-\cos \theta(s),
\end{equation}
if
$$\bigl(\cos\theta(s),\sin \theta(s)\bigr)=\frac{(x_1(s),x_2(s))}{\sqrt{x_1^2(s)+x_2^2(s)}}$$
denotes the projection of the curve $\gamma_2(s)=(x_1(s),x_2(s))$ onto the unit circle.  Since we are assuming that $\dot x_2(s_0)\ne 0$, we have that
\begin{equation}\label{k}
\dot \theta(s_0)\ne 0.
\end{equation}
Because $\gamma_2(s_0)=(x_1(s_0),0)$ and $x_1(s_0)\ne 0$, it follows that $\cos \theta(s_0)=\pm 1$, and therefore we get \eqref{i} from the chain rule if we use
\eqref{j} and \eqref{k}.
\end{proof}

To prove \eqref{3.28} we need to recall some oscillatory integral estimates for operators of the form
\begin{equation}\label{3.40}
\bigl(T_\la f\bigr)(t)=\int_{-\infty}^\infty e^{i \la \varphi(t,s)}\, a(t,s) f(s)\, ds,
\end{equation}
where we assume that $a, \varphi\in C^\infty(\R\times \R)$ and that $\varphi$ is real valued.  H\"ormander~\cite{Hor} proved that for $\la\ge1$ one has
\begin{equation}\label{3.41}
\|T_\la f\|_{L^q(\R)}\le C\la^{-\frac1q}\|f\|_{L^{\frac{q}{q-1}}(\R)} \quad \text{if } \, q\ge 2, \, \, \text{and } \, \, \varphi''_{ts}\ne 0 \, \, \text{on } \, \, \text{supp }a.
\end{equation}
The hypothesis in H\"ormander's oscillatory integral theorem is equivalent to the statement that if
$${\mathcal C}_\varphi =\bigl\{(t,\varphi'_t,s,\varphi'_s)\bigr\}\subset T^*\R\times T^*\R$$
is the canonical relation associated with the phase then the left projection
$$\Pi_L(t,s)=(t,\varphi'_t)$$
{\em and} the right projection
$$\Pi_L(t,s)=(s,\varphi'_s)$$
are local diffeomorphisms on $\text{supp }a$.

Pan and the second author~\cite{PanS} studied the next most straightforward case  where the hypothesis is relaxed and one assumes that {\em both} maps
are submersions with fold singularities.  In this case the power $\frac12$ in \eqref{3.41} has to be replaced by $\frac13$, which is sharp.  As was shown in
\cite{burq} and \cite{Hu} this result is relevant for studying restriction problems associated to curves possessing nonvanishing curvature.

Lemma~\ref{lemma3.2} (and earlier arguments in \cite{burq} and \cite{Hu}) indicate that, for the problem at hand of obtaining bounds for restrictions to geodesics, it is
useful to have estimates for the oscillatory integrals in \eqref{3.40} when just one of the two projections is a fold at the singular variety.  Greenleaf and Seeger~\cite{GS}
and Phong and Stein~\cite{PS0} showed that if one only assumes that $\Pi_L$ has fold singularities (and no assumptions about $\Pi_R$) then
$$\|T_\la f\|_{L^q(\R)}\le C\la^{-\frac14}\|f\|_{L^2(\R)}, \quad 2\le q\le 4.$$
We point out that unlike the case where one of $\Pi_L$ or $\Pi_R$ being nonsingular implies the same for the other projection, it need not follow that one of $\Pi_L$
or $\Pi_R$ is a submersion with folds if the other is.  Indeed the last part of Lemma~\ref{lemma3.2} provides an example where this does not happen.

Since, on account of Lemma~\ref{lemma3.2}, we need to deal with the situation where either $\Pi_L$ or $\Pi_R$ has a fold singularity (but no assumption on
the other), we shall just use the preceding estimate when $q=2$.  By taking adjoints we see that $\|T_\la\|_{L^2(\R)\to L^2(\R)}=O(\la^{-\frac14})$ if we just assume that
one of the maps $\Pi_L$ or $\Pi_R$  has fold singularities.  By interpolating with the trivial $L^1(\R)\to L^\infty(\R)$ bounds we conclude from this that
\begin{multline}\label{fold}
\|T_\la f\|_{L^q(\R)}\le C \la^{-\frac1{2q}}\|f\|_{L^{\frac{q}{q-1}}(\R)}, \, \, \, \text{if }q\ge 2,
\\
\text{and either} \, \, \Pi_L \, \, \text{or } \, \, \Pi_R\, \, \text{is a submersion with folds.}
\end{multline}
The constant $C$ here depends only on the size of finitely many derivatives of $\varphi$ and $a$ and on $\text{supp }a$ and on the lowerbounds for
$|\varphi''_{ts}|+|\varphi'''_{tts}|+|\varphi'''_{tss}|$ on this set.  Similarly, the one in H\"ormander's estimate \eqref{3.41} just depends on the size of finitely many
derivatives of the phase and amplitude and lower bounds for $|\varphi''_{ts}|$ on $\text{supp }a$.

Lemma~\ref{lemma3.2} tells us that for each of the phase functions in \eqref{3.13} then we either have the hypothesis in \eqref{3.41} or the one in \eqref{fold} whenever
$\alpha\notin \Stab$  (of course we get neither hypothesis if $\alpha\in \Stab$).  Therefore by \eqref{3.41} and \eqref{fold} for $q=4$ along with \eqref{3.14}--\eqref{3.18}, we have
that if $\alpha\notin \Stab$ then there is a constant $C_\alpha$ such that
\begin{equation}\label{3.43}
\Bigl\|\int_{-\frac12}^{\frac12} K^\gamma_{T,\la,\alpha}(t,s) h(s)\, ds\Bigr\|_{L^4([-\frac12,\frac12])}\le C_\alpha \la^{\frac38}\|f\|_{L^{\frac43}([-\frac12,\frac12])}, \quad \la>1.
\end{equation}
If we sum up over the $O(e^{CT})$ nonzero terms in \eqref{3.10}, we conclude that \eqref{3.28} is valid, which finishes the proof of the estimate \eqref{3.4} associated with
our fixed element $\gamma$ of the the space $\Pi$ of all unit length geodesics.

To prove the uniform bounds as $\gamma$ ranges over $\Pi$, let us fix an element $\gamma_0\in \Pi$ and let $\Sstab$ be the operator associated to
elements of the stabilizer group associated to $\gamma_0$.  Clearly, we then have \eqref{3.27} with uniform bounds as $\gamma$ ranges over
a neighborhood of $\gamma_0$ in $\Pi$.
 Also, if $\alpha$ is not in
the stabilizer group for $\gamma_0$ then it is not in the stabilizer group for nearby elements $\gamma$ of $\Pi$.  Based on this we claim that, if $\Sosc$ is the operator associated to $\gamma_0$,  then there exists a neighborhood ${\mathcal N}(\gamma_0)$ of $\gamma_0$ in $\Pi$ so that \eqref{3.28} holds with uniform constants as
$\gamma$ ranges over ${\mathcal N}(\gamma_0)$.  We use the fact that this neighborhood can be chosen so that the constants in 
\eqref{3.18} can be chosen to be independent of the elements of ${\mathcal N}$ for any finite collection of  $(j,k)$.  Thus, it  follows from \eqref{3.41} and 
\eqref{fold} (and the dependence of their constants) that we also have \eqref{3.43} with uniform constants for elements of ${\mathcal N}(\gamma_0)$.  This means that 
the
constant in \eqref{3.28} can be chosen to be independent of the elements of ${\mathcal N}(\gamma_0)$ as well, and, by the compactness of $\Pi$, we conclude from this
that the constant in \eqref{3.28} can also be chosen to be independent of $\gamma\in \Pi$.  Thus, \eqref{3.4} is valid with constants independent of $\gamma\in \Pi$,
which in turn yields \eqref{3.2} and completes the proof of Theorem~\ref{theorem2}.  \qed



\newsection{Further improvements in 3-dimensions}

The purpose of this section is to show that we can obtain further improvements of the known 3-dimensional restriction estimates for geodesics if
we assume {\em constant} nonpositive curvature.

To prove Theorem~\ref{theorem3}, we require an analog of Lemma~\ref{lemma3.2} for the 3-dimensional case which is more difficult than its
2-dimensional counterpart.  Our proof requires the assumption of constant curvature, but it would be interesting if this assumption could be weakened or removed.  This result and Theorem~\ref{theorem3} are somewhat reminiscent of Nikodym maximal estimates obtained by one of us
in \cite{SoJAMS}, whose proof also required the assumption of constant curvature.  In that work and the present, it is very convenient to be in
the position where there are many totally geodesic submanifolds, which is ensured by the assumption of constant curvature.

If we are willing to make this assumption then we have the following analog of Lemma~\ref{lemma3.2}.

\begin{lemma}\label{lemma4.1}  Let $\gamma_1=\{\gamma_1(t): t\in \R\}$ and $\gamma_2=\{\gamma_2(s): s\in \R\}$ be two {\em distinct} geodesics each parameterized by arclength
in ${\mathbb R}^3$ endowed with a metric $\tilde g$ of constant nonpositive sectional curvature.  If $d_{\tilde g}$ denotes the Riemannian distance function associated to the metric put
$\phi(t,s)=d_{\tilde g}(\gamma_1(t),\gamma_2(s))$.  It then follows that there is at most one point $(t_0,s_0)$ so that
\begin{equation}\label{4.1}
|\phi''_{ts}(t,s)|+|\phi'''_{tss}(t,s)|+|\phi'''_{tts}(t,s)|\ne 0, \quad \text{if } \, \phi(t,s)\ne0, \, \, \, \text{and } \, \, (t,s)\ne (t_0,s_0).
\end{equation}
\end{lemma}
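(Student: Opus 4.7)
The plan is to exploit the constant-curvature hypothesis to reduce the problem to an explicit model computation. By Cartan's classical theorem, the simply connected complete manifold $(\mathbb R^3,\tilde g)$ is globally isometric, up to scaling, to Euclidean $\mathbb R^3$ or to the hyperbolic space $\mathbb H^3$ of curvature $-1$. In either model the isometry group acts transitively on oriented orthonormal $3$-frames, so I can use an isometry to place $\gamma_1$ in a canonical position and then the stabilizer of $\gamma_1$ to normalize $\gamma_2$. Provided $\gamma_1$ and $\gamma_2$ admit a common perpendicular---which, apart from Euclidean parallel pairs and hyperbolic asymptotic pairs, is automatic---this reduces the configuration to two real parameters: the perpendicular distance $d\ge 0$ and an angle $\theta\in[0,\pi)$. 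In the hyperboloid model my normal form will be
\[
\gamma_1(t)=\cosh t\,e_0+\sinh t\,e_1,\qquad \gamma_2(s)=\cosh s\,(\cosh d\,e_0+\sinh d\,e_2)+\sinh s\,(\cos\theta\,e_1+\sin\theta\,e_3),
\]
with the Euclidean analogue $\gamma_1(t)=(t,0,0)$, $\gamma_2(s)=(s\cos\theta,\,s\sin\theta,\,d)$.

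The key analytic step is to introduce characteristic coordinates $u=t+s$, $v=t-s$, so that $\partial_t\partial_s=\partial_u^2-\partial_v^2$. In the Euclidean case direct differentiation of $\phi^2=t^2+s^2-2ts\cos\theta+d^2$ gives
\[
\phi''_{ts}=-\frac{ts\sin^2\theta+d^2\cos\theta}{\phi^3}.
\]
In the hyperbolic case the Minkowski formula $\cosh\phi(t,s)=-\langle\gamma_1(t),\gamma_2(s)\rangle$ together with the product-to-sum identities separates cleanly into
\[
\cosh\phi(t,s)=P\cosh u+Q\cosh v,\qquad P=\tfrac{\cosh d-\cos\theta}{2},\; Q=\tfrac{\cosh d+\cos\theta}{2},
\]
and a short algebraic simplification then yields $\phi''_{ts}=F(u,v)/\sinh^3\phi$ with
\[
F(u,v)=-(1+\cosh d\cos\theta)\,P\cosh u+(1-\cosh d\cos\theta)\,Q\cosh v.
\]

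The final step is a short analysis on the zero locus $\{F=0\}$. By the chain rule one has $\phi'''_{tts}=(\partial_u F+\partial_v F)/\sinh^3\phi$ and $\phi'''_{tss}=(\partial_u F-\partial_v F)/\sinh^3\phi$ there, so the three quantities in \eqref{4.1} vanish simultaneously exactly when $F=\partial_u F=\partial_v F=0$. Since $\partial_u F$ and $\partial_v F$ are scalar multiples of $\sinh u$ and $\sinh v$ respectively, in the generic range $|\cosh d\cos\theta|<1$ the only common zero of the two partials is $(u,v)=(0,0)$, i.e.\ $t=s=0$; a direct evaluation gives $F(0,0)=-\cos\theta\sinh^2 d$ (Euclidean analogue $-d^2\cos\theta$), which vanishes only in the perpendicular-skew case $\theta=\pi/2$. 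In that case the origin is the unique exceptional point and $\phi(0,0)=d>0$ as required. In all the remaining configurations---the extremal ranges $|\cosh d\cos\theta|\ge 1$, Euclidean parallels, and hyperbolic asymptotic pairs (handled in the upper half-space model, where $\cosh\phi=\cosh v+(a^2/2)e^{-u}$ and $F=-a^2e^{-u}<0$)---one finds that $F$ has constant sign, so $\phi''_{ts}$ never vanishes and no exceptional point arises.

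The main obstacle is not any single step but rather the case bookkeeping together with the need to treat the hyperbolic asymptotic configuration separately, since it does not admit the common-perpendicular normal form. The constant-curvature hypothesis is used essentially: it is precisely what produces the separated form $\cosh\phi=P\cosh u+Q\cosh v$, makes $F$ linear in $(\cosh u,\cosh v)$, and confines the zero set of $\phi''_{ts}$ to a single curve with at most one critical point---none of which survives in variable negative curvature, presumably the reason the authors record the result only in the constant-curvature setting.
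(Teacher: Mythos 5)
Your proposal is correct, but it takes a genuinely different route from the paper's proof. The paper works in geodesic normal coordinates centered at a point of $\gamma_1$, uses the explicit constant-curvature formula \eqref{4.2} for $\tilde g_{jk}$ and the fact (special to constant curvature) that every two-plane through the origin of such normal coordinates is totally geodesic, and closes the argument synthetically: if the three derivatives vanish at $(0,0)$, the connecting geodesic is a common perpendicular of $\gamma_1$ and $\gamma_2$, and a second such point would produce a geodesic quadrilateral (or triangle) with total angle $\ge 2\pi$, impossible in nonpositive curvature. You instead push the model-space reduction all the way: you realize $(\R^3,\tilde g)$ as $\R^3$ or $\mathbb H^3$, normalize the pair by its common perpendicular to two parameters $(d,\theta)$, pass to characteristic coordinates $u=t+s$, $v=t-s$ where $\partial_t\partial_s=\partial_u^2-\partial_v^2$, and exploit the clean separation $\cosh\phi=P\cosh u+Q\cosh v$. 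The whole lemma then becomes an explicit algebraic statement about the zero set of $F$, $\partial_u F$, $\partial_v F$, which you verify case by case. I checked the central computations -- the separation formula, the simplification to $F=-P(1+\cosh d\cos\theta)\cosh u+Q(1-\cosh d\cos\theta)\cosh v$, the value $F(0,0)=-\cos\theta\sinh^2 d$, the constant sign of $F$ in the extremal ranges, and the asymptotic case $F=-a^2e^{-u}$ -- and they are all correct. Your method buys explicitness and avoids the angle-defect argument entirely, at the cost of requiring the asymptotic/parallel configurations to be treated outside the common-perpendicular normal form; the paper's method is more synthetic and does not need to separate those cases, but relies on the totally-geodesic-plane observation and a comparison-geometry step. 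One small expository point: you note that $F(0,0)=-\cos\theta\sinh^2 d$ vanishes ``only in the perpendicular-skew case $\theta=\pi/2$,'' but it also vanishes when $d=0$ (intersecting geodesics); that case is harmless because $\phi(0,0)=0$ is excluded by hypothesis, but it deserves a sentence.
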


\begin{proof}
Since we are assuming that that $({\mathbb R}^3,\tilde g)$ has constant nonpositive sectional curvature it follows that the curvature must be either zero or negative.  In the case of zero curvature, $\phi(t,s)$ just denotes the Euclidean distance between points two distinct lines.  Since this case is very straightforward and follows from the argument for constant negative curvature, we shall assume that the sectional curvature of $({\mathbb R}^3,\tilde g)$ is identically equal to $-\kappa^2$ for some $\kappa>0$.

As in the proof of Lemma~\ref{lemma3.2}, we shall work in geodesic normal coordinates which vanish at a point $p\in \gamma_1$ satisfying $p\notin\gamma_2$.  We may assume further, as before, that, in these coordinates, $\gamma_1$ is the first coordinate axis, i.e., 
$$\gamma_1(t)=(t,0,0), \quad t\in \R.$$  For later use, we record that because of our assumption about constant sectional curvature, the metric in geodesic normal coordinates is given
by the formula \begin{equation}\label{4.2}g_{jk}(x)=\frac{x_jx_k}{|x|^2}+\frac{\sinh^2(\kappa |x|)}{\kappa^2|x|^2}\Bigl\{\delta_{jk}-\frac{x_jx_k}{|x|^2}\Bigr\},\end{equation}
with $|x|$ denoting the Euclidean distance to the origin.  (See \cite{Chavel}, Theorem II.8.2)

Let us write the second geodesic in these coordinates as
$$\gamma_2(s)=(x_1(s),x_2(s),x_3(s)), \quad s\in \R.$$
Then if
$$\phi(t,s)=d_{\tilde g}\bigl((t,0,0), (x_1(s),x_2(s),x_3(s))\bigr),$$
we need to verify \eqref{4.1}.  

To simplify the calculation, after relabeling, we may assume that we are calculating the  Hessian of $\phi$ and its derivatives at $t=s=0$ and
that $\gamma_2(0)\ne (0,0,0)$.
Furthermore,
after perhaps performing a rotation fixing the first coordinate axis, we may also assume that
$$\gamma_2(0)=(a,b,0), \quad (a,b)\ne (0,0).$$
If $b=0$ then $\gamma_2(0)$ is on $\gamma_1$, and in this case, the proof of Lemma~\ref{lemma3.2} can be adapted to show that $\phi'''_{tss}(0,0)\ne 0$.  Therefore,
we shall assume that $x_2(0)\ne 0$ in what follows.

Then
$$\phi''_{ts}(0,0)=\frac{\dot x_1(0)}{\sqrt{x_1^2(0)+x_2^2(0)+x_3^2(0)}}-\frac{x_1(0)(x_1(0)\dot x_1(0)+x_2(0)\dot x_2(0)+x_3(0)\dot x_3(0))}{(x_1^2(0)+x_2^2(0)+x_3^2(0))^{\frac32}}.
$$
From this we deduce that if $\phi''_{ts}(0,0)=0$ then we must have
$$0=x_2(0)\bigl(\dot x_1(0)x_2(0)-x_1(0)\dot x_2(0)\bigr).$$
Since we are assuming that $x_2(0)\ne 0$
this means that if $\phi''_{ts}(0,0)=0$ then we must have
$$\dot \gamma_2(0)=\alpha\bigl(x_1(0),x_2(0),x_3(0)\bigr) +\beta(0,0,1),$$
for some $\alpha$ and $\beta$ in $\R$.  If $\beta=0$, then $\gamma_2$ must be the geodesic which is the line passing through $(a,b,0)$ and the origin.  In this case, one can
adapt the proof of Lemma~\ref{lemma3.2} to see that $\phi'''_{tts}(0,0)\ne 0$.

Based on this, it suffices to handle the case where
\begin{multline}\label{4.3}
\gamma_2(0)=(a,b,0)\ne (0,0,0), \quad
\, \dot\gamma_2(0)=\alpha(a,b,0)+\beta(0,0,1), \quad \beta\ne 0, \\ \text{and } \, \phi''_{ts}(0,0)=0.
\end{multline}
Since our assumption that $\phi''_{ts}(0,0)$ leads to
\begin{align*}
\phi&'''_{tss}(0,0)
\\
&=\bigl(x_1^2(0)+x_2^2(0)+x_3^2(0)\bigr)^{-\frac32}
\\
&
\times \frac{d}{ds}\Bigl(
\dot x_1(s)(x_1^2(s)+x_2^2(s)+x_3^2(s))-x_1(s)(x_1(s)\dot x_1(s)+x_2(s)\dot x_2(s)+x_3(s)\dot x_3(s))\Bigr) \Bigl|_{s=0}
\\
&=\bigl(x_1^2(0)+x_2^2(0)+x_3^2(0)\bigr)^{-\frac32}
\, \Bigl[x_2(0)\bigl(\ddot x_1(0)x_2(0)-x_1(0)\ddot x_2(0)\bigr)-x_1(0)\dot x_3^2(0)\Bigr],
\end{align*}

Note that $\ddot x_1(0)x_2(0)-x(0)\ddot x_2(0)$ is a multiple of the inner product of $\ddot\gamma_2(0)$ with a unit normal vector to plane spanned by the radial
vector $(a,b,0)$ and the vertical vector $(0,0,1)$.  Since our assumption \eqref{4.3} implies that $\gamma_2(0)$ and $\dot \gamma_2(0)$ belong to this plane,
it would follow that
\begin{equation}\label{4.4}\ddot x_1(0)x_2(0)-x_1(0)\ddot x_2(0)=0\end{equation}
if this plane were totally geodesic.  In view of \eqref{4.2} and Jacobi's equation for geodesics, this turns out to be the case due to the fact that, if there is constant negative sectional curvature, then, in geodesic
normal coordinates, all two-planes passing through the origin are totally geodesic.

Since we are assuming that $\dot x_3(0)=\beta\ne 0$, \eqref{4.4} and the above calculation implies that, under the assumptions in \eqref{4.3} we have
\begin{equation*}
\phi'''_{tss}(0,0)=0 \, \iff \, x_1(0)=0.
\end{equation*}
In other words,
$$\gamma_2(0)=(0,b,0), \, \, b\ne 0.$$
This means that if $\phi''_{ts}(0,0)=\phi'''_{tss}(0,0)=0$ then the geodesic $\gamma(\tau)$ connecting $\gamma_1(0)$ and $\gamma_2(0)$ must 
satisfy
$$\langle \dot\gamma(\tau_1), \dot \gamma_1(0)\rangle_{\tilde g}=0, \quad \text{when } \, \gamma(\tau_1)=0.$$
Put more succinctly, the geodesic connecting $\gamma_1(0)$ and $\gamma_2(0)$ must be orthogonal to $\gamma_1$.
  Interchanging the roles of $t$ and $s$, we conclude that if $\phi''_{ts}(0,0)=\phi'''_{tss}(0,0)=\phi'''_{tts}(0,0)=0$ then this connecting geodesic
must be orthogonal to {\em both} $\gamma_1$ and $\gamma_2$.

If this is the case, we conclude that when $s_1\ne 0$ and $t_1\ne 0$ and $\phi(t_1,s_1)\ne 0$, then we cannot have
$\phi''_{ts}(t_1,s_1)=\phi'''_{tss}(t_1,s_1)=\phi'''_{tts}(t_1,s_1)=0$.  For if all three quantities vanished then the geodesic
connecting $\gamma_1(t_1)$ and $\gamma_2(s_1)$ would have to be orthogonal to both $\gamma_1$ and
$\gamma_2$.  Therefore the geodesic quadrilateral consisting of this connecting geodesic along with the one connecting
$\gamma_1(0)$ and $\gamma_2(0)$ and the geodesic segment of $\gamma_1$ between $\gamma_1(0)$ and
$\gamma_1(t_1)$ and the geodesic segment of $\gamma_2$ between $\gamma_2(0)$ and $\gamma_2(s_1)$
would have to be a quadrilateral of exactly 360 degrees, which is impossible, since in manifolds of negative sectional curvature
quadrilaterals always have total angles of {\em less} than 360 degrees.  Since a similar argument based on the fact that,
in this setting, geodesic triangles have total angle less than 180 degrees shows that if $\phi''_{ts}(0,0)=\phi'''_{tss}(0,0)=\phi'''_{tts}(0,0)=0$
then $|\phi''_{ts}(0,s)|+|\phi'''_{tss}(0,s)|+|\phi'''_{tts}(0,s)|\ne 0$ if $s\ne 0$ and $\phi(0,s)\ne 0$, the proof is complete.
\end{proof}

\begin{proof}[Proof of Theorem~\ref{theorem3}]
As in the proof of Theorem~\ref{theorem2}, let us first show that under the assumption of constant nonpositive curvature we can obtain improvements
for restriction estimates for a fixed unit length geodesic $\gamma \in \Pi$.  Specifically, we shall use Lemma~\ref{lemma4.1} to show that given
$\e>0$ we can find $\Lambda(\e,\gamma)<\infty$ so that
\begin{equation}\label{4.5}
\Bigl(\int_\gamma |e_\la|^2\, ds \Bigr)^{\frac12}\le \e \la^{\frac12}\|e_\la\|_{L^2(M)}, \quad \la \ge \Lambda(\e, \gamma).
\end{equation}

If we choose $\rho \in {\mathcal S}(\R)$ as in the proof of Theorem~\ref{theorem2}, we can repeat the first part of the proof of that result to see
that \eqref{4.5} would follow from showing that for $T\gg 1$ fixed we have
\begin{multline}\label{4.6}
\Bigl(\int_\gamma \bigl|\rho(T(\la-\sqdt))f\bigr|^2 \, ds \Bigr)^{\frac12}
\le C(1+\la)^{\frac12}\bigl(T/\log(2+T)\bigr)^{-\frac12}\|f\|_{L^2(M)}
\\
+C_T(1+\la)^{\frac38}\|f\|_{L^2(M)},
\end{multline}
where the constant $C$ here is independent of $T$, unlike $C_T$.

Repeating the $TT^*$ argument showing how \eqref{3.3} follows from \eqref{3.2}, we conclude that if $\chi(\tau)=(\rho(\tau))^2$, and
if $\gamma=\gamma(t)$, $|t|\le 1/2$, is a parameterization of $\gamma$ by arc length of our fixed $\gamma\in \Pi$, then we would obtain
\eqref{4.6} if we could show that for $\la>1$ we have
\begin{multline}\label{4.7}
\Bigl(\int_{-\frac12}^{\frac12}\Bigl|\, \int_{-\frac12}^{\frac12}
\chi\big(T(\la-\sqdt)\Bigr)(\gamma(t),\gamma(s)) \, h(s)\, ds\, \Bigr|^2 \, dt\Bigr)^{\frac12}
\\
\le C\la (T/\log(2+T))^{-1}\|h\|_{L^2([-\frac12, \frac12])} +C_T\la^{\frac34}\|h\|_{L^2([-\frac12,\frac12])}.
\end{multline}

We may assume, as before, that the injectivity radius of $(M,g)$ is ten or more.  Then, as in the proof of Theorem~\ref{theorem2}, we pick a bump
function $\beta\in C_0^\infty(\R)$ satisfying
$$\beta(\tau)=1 \, \, \, \text{for } \, \, |\tau|\le 3/2, \, \, \text{and } \, \, \beta(\tau)=0, \, \, \, |\tau|\ge 2.$$
We then may write
\begin{multline*}
\chi(T(\la-\sqd))(x,y)=\frac1{2\pi T}\int \beta(\tau)\Hat \chi(\tau/T) e^{i\la\tau} \bigl(e^{- i\tau\sqd}\bigr)(x,y)\, d\tau
\\
+\frac1{2\pi T}\int (1-\beta(\tau)) \, \Hat \chi(\tau/T) e^{i\la\tau} \bigl(e^{- i\tau\sqd}\bigr)(x,y)\, d\tau
=K_0(x,y)+K_1(x,y).
\end{multline*}

Since we are assuming that the injectivity radius is ten or more and $\beta(\tau)=0$ for $|\tau|\ge2$ it follows from the proof of Theorem~\ref{theorem1}
that if we restrict the kernel $K_0$ to $\gamma\times \gamma$ then the resulting integral operator satisfies better bounds than are posited in
\eqref{4.7}.  Specifically, we have
$$\Bigl(\int_{-\frac12}^{\frac12}\Bigl|\, \int_{-\frac12}^{\frac12}
K_0(\gamma(t),\gamma(s)) \, h(s)\, ds\, \Bigr|^2 \, dt\Bigr)^{\frac12} \le CT^{-1}\la \|h\|_{L^2([-\frac12,\frac12])}.$$
As a result, if we repeat arguments from the proof of Theorem~\ref{theorem2}, we find that if we set
\begin{equation}\label{4.8}
S_\la h(t)=\frac1{\pi T}\int_{-\infty}^\infty \hint (1-\beta(\tau)) \, \Hat \chi(\tau/T) e^{i\la\tau} \bigl(\cos \tau\sqd\bigr)(\gamma(t),\gamma(s))\, h(s) \, ds d\tau,
\end{equation}
then the following estimate would imply \eqref{4.7}
\begin{equation}\label{4.9}
\|S_\la h\|_{L^2([-\frac12,\frac12])}
\le C\la (T/\log(2+T))^{-1}\|h\|_{L^2([-\frac12, \frac12])} +C_T\la^{\frac34}\|h\|_{L^2([-\frac12,\frac12])}.
\end{equation}

By the Cartan-Hadamard theorem (see \cite{Chavel}, \cite{do Carmo}), the universal cover of $(M,g)$ is $(\Rth, \tilde g)$, where $\tilde g$ denotes
the pullback via the covering map $\kappa$  of the metric $g$ on $M$ to the universal cover $\Rth$.  Consequently, \eqref{3.7} is valid in the present context if $\Gamma$
now denotes the deck transformations for this universal cover.

Therefore, we can write
$$S_\la h(t)=\frac1{\pi T}\sum_{\alpha\in \Gamma}\int_{-\infty}^\infty \hint (1-\beta(\tau)) \, \Hat \chi(\tau/T) e^{i\la\tau} 
\bigl(\cos \tau\sqdt\bigr)(\tilde \gamma(t), \alpha(\tilde \gamma(s)))\, h(s) \, ds d\tau,$$
and, also, as before, write
$$S_\la h(t)=\Sstab h(t) + \Sosc h(t),\quad |t|\le 1/2,$$
with $\Sstab$ and $\Sosc$ defined by \eqref{3.9} and \eqref{3.10}, respectively.

If we use \eqref{2.14} in place of \eqref{ftcirc} we can repeat the proof of the 2-dimensional estimate \eqref{kest} to see that we now have
that 
the kernel $K^{\text{Stab}}_\la$ of $\Sstab$ satisfies
\begin{equation*}
|K^{\text{Stab}}_\la(t,s)|\le CT^{-1}\la \sum_{0\le j\le T+1}(1+j)^{-1}+C_T\le C\la(T/\log(2+T))^{-1}+C_T.
\end{equation*}
Consequently, 
\begin{equation*}
\|\Sstab h\|_{L^2([-\frac12,\frac12])}\le \bigl(C\la (T/\log(2+T))^{-1}+C_T\bigr)\, \|h\|_{L^{2}([-\frac12,\frac12])},
\end{equation*}
which is better than the bounds posited in \eqref{4.9} for the whole operator.

Based on this, we would have \eqref{4.9} if we could show that for every $\delta>0$
\begin{equation}\label{4.10}
\|\Sosc h\|_{L^2([-\frac12,\frac12])}\le \bigl(C_{T,\delta}\la^{\frac34}+\delta \la \bigr)\|h\|_{L^2([-\frac12,\frac12])}.
\end{equation}
Indeed, choosing $\delta=\log(2+T)/T$ here yields \eqref{4.9}.

To prove this we shall use the fact that if, once again, we repeat the proof of Lemma~\ref{lemma3.2} using \eqref{2.14} in place of \eqref{ftcirc}
we find that the kernel of each of the summands in the definition of $\Sosc$, which are given by the formula \eqref{3.11}, now satisfy
for $|s|, |t|\le 1/2$
\begin{equation}\label{4.11}
K^\gamma_{T,\la,\alpha}(t,s)=
w(\tilde \gamma(t),\alpha(\tilde \gamma(s)))
\sum_\pm a_\pm(T,\la; \,  \phi_{\gamma,\alpha}(t,s))e^{\pm i\la \phi_{\gamma,\alpha}(t,s)}
+R^\gamma_{T,\la,\alpha}(t,s),
\end{equation}
where, as before,
\begin{equation}\label{4.12}
\phi_{\gamma,\alpha}(t,s)=d_{\tilde g}(\tilde \gamma(t),\alpha(\tilde \gamma(s))), \quad |s|, |t|\le 1/2,
\end{equation}
but now \eqref{3.15} is replaced by
\begin{equation}\label{4.13}
|\partial_r^ja_\pm(T,\la; \, r)|\le C_jT^{-1}\la r^{-1-j}, \quad r\ge 1,
\end{equation}
while \eqref{3.16} and \eqref{3.18} remain valid.

It is at this point that we need to use the geometric lemma, Lemma~\ref{lemma4.1}, that relies on our assumption of constant curvature.  By
\eqref{4.1}, given $Id\ne \alpha \in \Gamma$ there is at most one exceptional value of $t=t(\gamma,\alpha)$ and one exceptional value
of $s=s(\gamma,\alpha)$ so that
\begin{multline}\label{4.14}
|\partial_t\partial_s\phi_{\gamma,\alpha}(t,s)|+|\partial^2_t\partial_s\phi_{\gamma,\alpha}(t,s)|+|\partial_t\partial^2_s\phi_{\gamma,\alpha}(t,s)|\ne 0,
\\
\text{if } \, \, (t,s)\in [-\frac12,\frac12]\times [-\frac12,\frac12], \quad \text{and } \, \, (t,s)\ne (t(\gamma,\alpha),s(\gamma,\alpha)).
\end{multline}

To exploit this, fix $b\in C^\infty({\mathbb R}^2)$ satisfying
\begin{equation}\label{4.15}
0\le b\le 1, \quad b(y)=0, \,\,  |y|\le \frac12, \quad \text{and } \, b(y)=1, \, \, |y|\ge 1.
\end{equation}
We then, given $\e>0$, split
\begin{equation}\label{4.16}
K^\gamma_{T,\la,\alpha}(t,s)=K^{\gamma,\e}_{T,\la,\alpha}(t,s)+R^{\gamma,\e}_{T,\la,\alpha}(t,s),
\end{equation}
where
$$K^{\gamma,\e}_{T,\la,\alpha}(t,s)=b\bigl(\e^{-1}((t,s)-(t(\gamma,\alpha),s(\gamma,\alpha))\bigr)\, K^\gamma_{T,\la,\alpha}(t,s),
$$
if there is such an exceptional $(t(\gamma,\alpha),s(\gamma,\alpha))$ and $K^{\gamma,\e}_{T,\la,\alpha}=K^\gamma_{T,\la,\alpha}$ if
there is none.

We then have, by \eqref{4.14}, that
$$|\partial_t\partial_s\phi_{\gamma,\alpha}|+|\partial^2_t\partial_s\phi_{\gamma,\alpha}|+|\partial_t\partial^2_s\phi_{\gamma,\alpha}|
\ne 0 \quad \text{on } \, \, \text{supp }K^{\gamma,\e}_{T,\la,\alpha}.$$
This means that the oscillatory integral operator $S^{\gamma,\e}_{T,\la,\alpha}$ with kernel $K^\gamma_{T,\la,\alpha}$ has a canonical 
relation for which either the right projection, $\Pi_R$, or the left projection, $\Pi_L$, is a submersion with at most fold singularities.  Therefore,
by \eqref{3.41} or \eqref{fold} we have that
$$\|S^{\gamma,\e}_{T,\la,\alpha}h\|_{L^2([-\frac12,\frac12])}\le C_{\alpha,\e}\la^{\frac34}\|h\|_{L^2([-\frac12,\frac12])}.$$
Since, as noted before, there are $O(\exp(cT))$ of these terms which are nonzero, we conclude that if
$$\tilde S^{\text{Osc}}_\la =\sum_{\alpha\in \Gamma}S^{\gamma,\e}_{T,\la,\alpha},$$
then we have
\begin{equation}\label{4.17}
\|\tilde S^{\text{Osc}}h\|_{L^2([-\frac12,\frac12])}\le C_{T,\e}\la^{\frac34}\|h\|_{L^2([-\frac12,\frac12])}.
\end{equation}

Clearly, by \eqref{4.13}, \eqref{4.15} and \eqref{4.16} and Young's inequality, each integral operator $R^{\gamma,\e}_{T,\la,\alpha}$ with kernel
$R^{\gamma,\e}_{T,\la,\alpha}(t,s)$ satisfies
$$\|R^{\gamma,\e}_{T,\la,\alpha}h\|_{L^2([-\frac12,\frac12])}\le C_\alpha \e \la \|h\|_{L^2([-\frac12,\frac12])},$$
with $C_\alpha$ independent of $\e$.  Thus, if
$$R^{\text{Osc}}_\la =\sum_{\alpha\in \Gamma}R^{\gamma,\e}_{T,\la,\alpha},$$
we have
\begin{equation}\label{4.18}
\|R^{\text{Osc}}_\la h\|_{L^2([-\frac12,\frac12])}\le C_T\e \la \|h\|_{L^2([-\frac12,\frac12])},
\end{equation}
where $C_T$ depends on $T$, but is independent of $\e$.

Since $\Sosc=\tilde S^{\text{Osc}}_\la +R^{\text{Osc}}_\la$, we obtain \eqref{4.10} from \eqref{4.17} and \eqref{4.18}.  Indeed, since $T$ is fixed,
we just need to choose $\e>0$ small enough so that $C_T\e<\delta$.  This completes the proof of \eqref{4.9} and, consequently, that of
\eqref{4.5}.

We have therefore shown that, under the assumption of constant nonpositive curvature, we have improved $L^2$-restriction estimates for
each fixed $\gamma\in \Pi$.  Since it is clear that the compactness arguments at the end of the proof of Theorem~\ref{theorem2} yield bounds
which are uniform as $\gamma$ ranges over $\Pi$, we conclude that we also have \eqref{1.6}, which completes the proof of Theorem~\ref{theorem3}.
\end{proof}


\bigskip
\noindent{\bf Remark.}
If we could prove that for metrics $\tilde g$ on ${\mathbb R}^3$ of nonpositive curvature we have that $\phi''_{ts}$ only vanishes on
a set of measure zero if $\phi$ 
associated to distinct geodesics on $({\mathbb R}^3,\tilde g)$
is defined as in Lemma~\ref{lemma3.2}, then it is clear that the proof of Theorem~\ref{theorem3} would show that its conclusions would be
valid in this case.  
Without making an assumption such as constant curvature,
this appears to be difficult to obtain.  
The fact that it seems difficult to prove an appropriate version of Lemma~\ref{lemma3.2} in three-dimensions 
or of Lemma~\ref{lemma4.1} without the constant curvature assumptions
is also related to the fact that
what is referred to as the $n\times n$ Carleson-Sj\"olin condition in \cite{mss} and \cite{S3} is much more complicated in three or more dimensions
as opposed to two-dimensions.  Counterexamples for certain types of 
natural oscillatory integral bounds of  Minicozzi and the second author~\cite{MinS} (which were obtained independently later
also by Bourgain and Guth~\cite{BGu} as well as much harder positive results) are a manifestation of this; however, the negative results in 
\cite{BGu} and \cite{MinS} are probably not related to whether or not a suitable version of Lemma~\ref{lemma3.2} is valid in three-dimensions.
As was shown in \cite{MinS} and  \cite{SoJAMS} one tends to avoid these pathologies when there are many totally geodesic submanifolds.  
In the case of odd dimensions $n$ the results in \cite{MinS} show that certain oscillatory integral estimates involving the Riemannian distance function as phase can break down when there are not many totally geodesic submanifolds of dimension $(n+1)/2$, while in even dimensions this may happen when there are not many totally geodesic submanifolds of dimension $(n+2)/2$.


\begin{thebibliography}{MA}

\bibitem{Berard} P. H. B\'erard, {\em On the wave equation on a compact
manifold without conjugate points}, Math. Z. {\bf 155} (1977), 249--276.
\bibitem{bourgainef} J. Bourgain, {\em Geodesic restrictions and $L^p$-estimates
for eigenfunctions of Riemannian surfaces}, Linear and complex
analysis, 27--35, Amer. Math. Soc. Tranl. Ser. 2, 226, Amer. Math. Soc., Providence, RI, 2009.
\bibitem{B1}J. Bourgain, \emph{Moment inequalities for trigonometric polynomials
		with spectrum in curved hypersurfaces}, to appear in Israel J. Math.
\bibitem{BGu} J. Bourgain and L. Guth, {\em Bounds on oscillatory integral operators based on multilinear estimates} Geom. Funct. Anal. {\bf 21} (2011), 1239--1295.
\bibitem{BRud} J. Bourgain and Z. Rudnick, {\em Restriction of toral eigenfunctions to hypersurfaces}, C. R. Math. Acad. Sci. Paris {\bf 347} (2009), 1249--1253.
\bibitem{BSSY}  J. Bourgain, P. Shao, C. D. Sogge and X. Yao, {\em On $L^p$-resolvent estimates and the density of eigenvalues for compact Riemannian manifolds}, 
 arXiv:1204.3927.
\bibitem{burq} N. Burq, P. G\'erard and N. Tzvetkov, {\em Restriction of the Laplace-Beltrami eigenfunctions to submanifolds}, Duke Math. J.  {\bf 138}
(2007), 445--486.
\bibitem{CS} L. Carleson and P. Sj\"olin, {\em Oscillatory integrals and a multiplier problem for the disc}, Studia Math. {\bf 44} (1972), 287--299.
\bibitem{Chavel} I. Chavel, {\em Riemannian geometry.  A modern introduction}, 2nd Edition, Cambridge University Press, Cambridge 2006.
\bibitem{Chen} X. Chen, {\em An improvement on eigenfunction restriction estimates for compact boundaryless Riemannian manifolds with nonpositive sectional curvature},
arXiv:1205.1402.
\bibitem{CM} T. H. Colding and W. P. Minicozzi II, {\em Lower bounds for nodal sets of eigenfunctions}, Comm. Math. Phys. {\bf 306} (2011),  777--784.
\bibitem{do Carmo} M. Do Carmo, {\em Riemannian geometry}, Birkh\"auser, Basel, Boston, Berlin,
1992.
\bibitem{GS} A. Greenleaf and A. Seeger, {\em Fourier integral operators with fold singularities} J. Reine Angew. Math. {\bf 455} (1994), 35--56.
\bibitem{HT} A. Hassell and M. Tacy, personal communication.
\bibitem{Helgason} S. Helgason, {\em Differential geometry, Lie groups, and symmetric spaces}, Pure and Applied Mathematics, {\bf 80}, Academic Press, Inc., New York-London, 1978.
\bibitem{HeS} H. Hezari and C. D. Sogge, {\em A natural lower bound for the size of nodal sets},  arXiv:1107.3440, to appear in Analysis and PDEs.
\bibitem{Hor} L. H\"ormander, {\em Oscillatory integrals and multipliers on $FL^p$}, Ark. Mat. {\bf 11}  (1973), 1--11.
  \bibitem{Hor3} L. H\"ormander, {\em The analysis of linear partial differential operators. III. Pseudodifferential operators}, Springer-Verlag, Berlin, 1985.
\bibitem{Hu}  R. Hu, {\em $L^p$ norm estimates of eigenfunctions restricted to submanifolds}, Forum Math. {\bf 6}  (2009), 1021--1052. 
\bibitem{MinS} W. Minicozzi II and C. D. Sogge,
{\em Negative results for Nikodym maximal functions and related oscillatory integrals in curved space},
Math. Res. Lett. {\bf 4} (1997),  221--237. 
\bibitem{mss} G. Mockenhaupt, A. Seeger, C. D. Sogge, {\em Local smoothing of Fourier
integral operators and Carleson-Sj\"olin estimates}, J. Amer. Math. Soc. {\bf 6} (1993), 65--130.
\bibitem{PanS} Y. Pan and C. D. Sogge, {\em Oscillatory integrals associated to folding canonical relations},
Colloq. Math. {\bf 60/61} (1990), 413--419. 
\bibitem{PS0}  D. H. Phong and E. M. Stein {\em Radon transforms and torsion}, Int. Math. Res. Notices {\bf 4} (1991)  49-60.
\bibitem{PS} D. H. Phong and E. M. Stein, {\em Oscillatory integrals with polynomial phases}, Invent. math. {\bf 110} (1992), 39--62.
\bibitem{rez} A. Reznikov, {\em Norms of geodesic restrictions for eigenfunctions on hyperbolic
surfaces and representation theory}, arXiv:math.AP/0403437.
\bibitem{S2} C. D. Sogge, {\em Concerning the $L^p$ norm of spectral clusters for second-order elliptic operators
on compact manifolds}, J. Funct. Anal. {\bf 77} (1988), 123--138.
\bibitem{S3} C. D. Sogge, {\em Fourier integrals in classical analysis}, Cambridge Tracts in Mathematics, 105.
Cambridge University Press, Cambridge, 1993.
\bibitem{SoJAMS} C. D. Sogge, {\em Concerning Nikodym-type sets in 3-dimensional curved spaces}, J. Amer. Math. Soc.  {\bf 12 } (1999),  1--31.
\bibitem{Sokakeya} C. D. Sogge, {\em Kakeya-Nikodym averages and $L^p$-norms
of eigenfunctions}, Tohoku Math. J.  {\bf  63} (2011), 519--538.
\bibitem{SoggeHang} C. D. Sogge, {\em Hangzhou lectures on eigenfunctions of the Laplacian},  Annals of Math Studies, Princeton University Press, 2014, to appear.
\bibitem{STZ}  C. D. Sogge, J. Toth and S. Zelditch, {\em About the blowup of quasimodes on Riemannian manifolds} J. Geom. Anal. {\bf 21} (2011), 150--173.
\bibitem{SZDuke} C. D. Sogge and S. Zeltich, {\em Riemannian manifolds with maximal eigenfunction growth}, Duke Math. J. {\bf 114} (2002),  387--437.
\bibitem{SZ} C. D. Sogge and S. Zelditch, {\em Lower bounds on the Hausdorff measure of nodal sets}, Math. Res. Lett. {\bf 18} (2011), 25--37.
\bibitem{SZ2} C. D. Sogge and S. Zelditch, {\em Concerning the $L^4$ norms of typical eigenfunctions on compact surfaces}, 
``Recent development in geometry and analysis", Advanced Lectures in Mathematics, 23, Int. Press, 407--423,
(arXiv:1011.0215).
\bibitem{SZ3} C. D. Sogge and S. Zelditch, {\em On eigenfunction restriction estimates and $L^4$-bounds for compact surfaces 
with nonpositive
curvature}, arXiv:1108.2726.
\bibitem{StS} E. M. Stein and R.  Shakarchi, {\em Fourier analysis. An introduction. Princeton Lectures in Analysis, 1},  Princeton University Press, Princeton, NJ, 2003.
\bibitem{TZ} J. Toth and S. Zelditch, {\em Quantum ergodic restriction theorems, II: manifolds without boundary}, 	arXiv:1104.3531.





\end{thebibliography}
\end{document}